\documentclass[draftcls, onecolumn, 12pt]{IEEEtran}
\usepackage{threeparttable,placeins,makecell,stfloats,cite}
\usepackage{threeparttable,arydshln}
\usepackage{amsmath,amssymb}
\usepackage{amsthm}
\usepackage[dvips]{graphicx}
\usepackage[usenames]{color}
\usepackage{amsfonts}
\usepackage{latexsym}
\usepackage{multirow}
\usepackage{caption}
\usepackage{algorithm}
\usepackage{algpseudocode}

\usepackage{rotating}
\usepackage{float}
\usepackage[format=hang]{subfig}
\usepackage{url}
\usepackage{cite}
\usepackage{microtype}

\newtheorem{theorem}{\textit{Theorem}}
\newtheorem{lemma}{\textit{Lemma}}
\newtheorem{corollary}{\textit{Corollary}}

\newtheorem{remark}{\textit{Remark}}
\newtheorem{example}{\textit{Example}}
\newtheorem{construction}{\textit{Construction}}
\newtheorem{conjecture}{\textit{Conjecture}}

\allowdisplaybreaks

\begin{document}
	
	\title{On the Scalability of Quasi-Complementary Sequence Sets: Quadratic and Cubic Laws }
	
	\author{Huaning~Liu, Lirong~Guo~and~Zilong~Liu
		\thanks{Huaning Liu and Lirong Guo are with Research Center for Number Theory and Its Applications, School of Mathematics, Northwest University, Xi'an 710127, China 
			(e-mail: hnliu@nwu.edu.cn; g210154032@163.com). 	
			Zilong Liu is with the School of Computer Science and Electronics Engineering, University of Essex, UK (e-mail: zilong.liu@essex.ac.uk). }
	}
	
	\maketitle
	
	\begin{abstract}
		This work is concerned with the fundamental scaling laws of quasi-complementary sequence sets (QCSSs) by understanding how large the set size (denoted by $M$) can grow with the flock size ($K$) and the sequence length ($N$). We first establish a geometric framework that transforms a QCSS into a complex unit-norm codebook, through which and by exploiting the density thresholds of the codebooks, certain polynomial upper bounds of the QCSS set size are obtained. Sharp quadratic and cubic scaling laws are then introduced. Specifically, we show that asymptotically optimal QCSSs with tightness factor $\rho=1$ satisfy $M \le (1+o(1))K^2N$, while asymptotically near-optimal QCSSs satisfy $M \le (1+o(1))K^3N^2$ for $\rho < {(1+\sqrt{5})}/{2}$. To validate these upper bounds, we further propose explicit additive-character and mixed-character
		based constructions for QCSSs that achieve $M = K^2N + K$ and $M = K^3N^2 + 2K^2N + K$, respectively, 
		thereby showing that the quadratic and cubic scaling laws are asymptotically tight. Our
		proposed constructions admit flexible parameter choices, and their maximum correlation estimates are shown
		to be tight through explicit extremal examples. Additionally, it is conjectured that the cubic scaling law is universal for all $1<\rho\le 2$, i.e., any asymptotically near-optimal QCSSs should satisfy $M \le (1+o(1))K^3N^2$. This identifies a fundamental cubic barrier for QCSS scalability.
	\end{abstract}
	
	\begin{IEEEkeywords}
		Quasi-complementary sequence set (QCSS), scalability, Welch bound, Levenshtein bound, codebooks, correlation bound, sequence design.
	\end{IEEEkeywords}
	
	\newpage
	
	\section{Introduction}
	A perfect complementary sequence set (PCSS) refers to a set of mutually orthogonal two-dimensional matrices (called complementary matrices or complementary codes) that have zero (non-trivial) aperiodic/periodic auto and cross-correlation sums \cite{TsengL1972,SuehiroH1988,RathinakumarC2008}. Such an ideal correlation property has enabled a wide range of applications, including peak-to-average power ratio (PAPR) control in orthogonal frequency division multiplexing \cite{DavisJ1999,Liu2013-TIT}, multi-carrier code-division multiple-access \cite{Chen2001,Liu2014-TCOM,Liu2014-TWC}, channel estimation \cite{WangA2007,Fan2008-TWC}, and Doppler-resilient waveform design \cite{PezeshkiCMH2008,Tan2014-TSP,Sheng2025-TIT}. 
	Nevertheless, the set size of each PCSS is upper bounded by its flock size, i.e., the number of rows of each complementary matrix, thus leading to limited number of complementary codes that can be supported in multi-user or multi-antenna systems. To overcome this limitation, the concept of ``quasi-complementary sequence sets (QCSSs)" was coined by Liu \textit{et al} by allowing small (nonzero) maximum correlation magnitude for all the possible time-shifts \cite{LiuGM2014,LiuPGB2013}. By exploiting this relaxation, one can achieve a larger set size while maintaining acceptable system performance.
	
	Formally, an $(M,K,N,\delta_{\max})$-QCSS consists of $M$ quasi-complementary matrices of size $K\times N$, where $M$ denotes the set size, $K$ the flock size, $N$ the sequence length, and $\delta_{\max}$ the maximum periodic correlation magnitude\footnote{In this work, we focus on periodic QCSSs, although there also exist QCSSs which are characterized by their low maximum aperiodic correlation magnitude \cite{LiuPGB2013}.}. A fundamental lower bound on $\delta_{\max}$ was established in \cite{LiuPGB2013}, given by
	\begin{eqnarray*}\label{Equation:delta}
		\delta_{\max}\geq \delta_{\mathrm{opt}}=KN\sqrt{\frac{M/K-1}{MN-1}}.
	\end{eqnarray*}
	The tightness factor $\rho = \delta_{\max}/\delta_{\mathrm{opt}}$ quantifies the proximity to optimality. For sufficiently large $N$, asymptotically optimal QCSSs are obtained for $\rho \to 1$. Asymptotically near-optimal QCSSs\footnote{One may also define asymptotically near-optimal QCSSs for small $\rho$ larger than 2, but this is not the interest of our work.} are obtained for $1 < \rho \le 2$.
	
	Extensive efforts have been devoted to constructing optimal or near-optimal QCSSs with various parameters. Existing approaches are based on Singer difference sets \cite{LiuPGB2013}, almost difference sets \cite{LiYL2019}, additive or multiplicative characters over finite fields \cite{LiLX2018,LiTLX2018,LiTLX2019,LuoCSH2021,XiaoLC2025},
	polynomial-based constructions and Gauss-sum techniques \cite{HengWXZ2024new,WangHL2025}. Despite these advances, existing works focus on explicit constructions and correlation optimization, without addressing a fundamental research question of QCSSs: \textit{Asymptotically, how does the maximum set size $M$ scale with the flock size $K$ and the sequence length $N$ for different $\rho$?}
	
	For an explicit understanding of the above question and to illustrate the current state-of-the-art, we summarize representative constructions and their parameter relationships in Tables I and II on existing asymptotically optimal and near-optimal QCSSs, respectively, both including the proposed constructions in this work which will be introduced later.
	A notable observation from these existing constructions is that the achievable set size $M$ consistently exhibits polynomial growth patterns in terms of $K$ and $N$. Nevertheless, these scaling relations appear only implicitly through individual constructions, and there is no general theoretical framework that can explain or predict the underlying scaling laws. The main difficulty lies in the intricate coupling between combinatorial structure, algebraic constructions, and correlation constraints.
	
	
	\begin{table}[ht]\label{Table-1}
		\centering
		\fontsize{5.5}{5}\selectfont 
		\caption{Asymptotically optimal QCSS constructions and their induced scaling laws}
		\begin{tabular}{|p{1.3cm}|p{1.1cm}|p{0.8cm}|p{1.8cm}|p{1.1cm}|p{2.6cm}|p{3cm}|p{1.05cm}|}
			\hline
			Set size $M$ & Flock size $K$ & Length $N$ & $\delta_{max}$ & Alphabet Size & Scaling Law & Parameter Conditions  & References \\
			\hline
			$2^{n}$ & $2^{n - 1} - 1$ & $2^{n} - 1$ & $\frac{2^{n} + 2^{\frac{n}{2}}}{2}$ & $4(2^{n} - 1)$ &  $M=N+1$  & $n > 1$ & \cite{LiuPGB2013}  \\
			\hline
			$p$ & $\frac{p - 1}{2}$ & $p$ & $\leq \frac{p + \sqrt{p}}{2}$ & $p$ & $M=N$  & $p\equiv 1\ (\text{mod}\ 4)\ \text{is a prime}$ & \cite{LiLX2018}  \\
			\hline
			$p^{n} - 2$ & $\frac{p^{n} - 1}{2}$ & $p^{n} - 1$ & $\leq \frac{p^{n} + 4\sqrt{p^{n} + 3}}{2}$ & $p^{n} - 1$ &  $M=N-1$  & $p\ \text{is an odd prime}$ & \cite{LiTLX2019} \\
			\hline
			$p^{2n} - 2$ & $\frac{p^{2n} - 1}{2}$ & $p^{2n} - 1$ & $p^{n}(p^{2n} + 3)$ & $p^{2n} - 1$ & $M=N-1$  & $p\ \text{is a prime}$ & \cite{LiTLX2019} \\
			\hline
			$p^{n} - 1$ & $\frac{p^{n} - 1}{2}$ & $p^{n} - 1$ & $\leq \frac{p^{n} + \sqrt{p^{n}}}{2}$ & $p(p^{n} - 1)$ &  $M=N$ & $p\ \text{is an odd prime}$ & \cite{LiTLX2018} \\
			\hline
			$p^{n} - 1$ & $p^{n - 1}$ & $p^{n} - 1$ & $\leq p^{n - \frac{1}{2}}$ & $p(p^{n} - 1)$ &  $M=N$  & $p\ \text{is a prime and}\ n > 1$ & \cite{LiTLX2018} \\
			\hline
			$p^{2n} - 1$ & $p^{n}$ & $p^{2n} - 1$ & $p^{3n/2}$ & $p(p^{2n} - 1)$ &  $M=N$ & $p\ \text{is a prime}$ & \cite{LiTLX2018} \\
			\hline
			$2^{n} - 1$ & $2^{n - 1} - 1$ & $2^{n} - 1$ & $2^{n - 1}$ & $2(2^{n} - 1)$ & $M=N$  & $n > 1$ & \cite{LiTLX2018} \\
			\hline
			$p^{n}$ & $\frac{p^{n} - 1}{2}$ & $p^{n} - 1$ & $\frac{p^{n} + 1}{2}$ & $p$ & $M=N+1$  & $p\ \text{is an odd prime,}\ p^{n} > 3$ & \cite{LuoCSH2021} \\
			\hline
			$p^{n}$ & $\frac{p^{n} - p^{n - 1}}{2}$ & $p^{n} - 1$ & $\frac{p^{n} + p^{n - 1}}{2}$ & $p$ &  $M=N+1$  & $p\ \text{is an odd prime,}\ p^{n} > 3$ & \cite{LuoCSH2021} \\
			\hline
			$p^{n}$ & $u$ & $p^{n} - 1$ & $v$ & $p$ &   $M=N+1$  & $p\ \text{is an odd prime,}\ n > 1\ \text{is odd}$ & \cite{LuoCSH2021}  \\
			\hline                
			$p^{n}(p^{n} - 1)$ & $p^{n}$ & $p^{n} - 1$ & $p^{n}$ & $p$ &  $M=KN$   & $p\ \text{is a prime,}\ n > 1$ & \cite{XiaoLC2025} \\
			\hline
			$p^{n}(p^{n} - 1)$ & $p^{n} - 1$ & $p^{n} - 1$ & $p^{n} + 1$ & $p$ & $M=KN+K$   & $p\ \text{is a prime,}\ n > 1$ & \cite{XiaoLC2025} \\
			\hline            
			$p^{2n}$ & $p^{n} - 1$ & $p^{n} - 1$ & $p^{n}$ & $p$ &   $M=KN+K+N+1$    & $p\ \text{is an odd prime,}\ n \geq 1$ & \cite{HengWXZ2024new}  \\
			\hline
			$2^{2n}$ & $2^{n}$ & $2^{n} - 1$ & $2^{n}$ & $2$ &   $M=KN+K$    & $n \geq 1$ & \cite{HengWXZ2024new}  \\
			\hline
			$p^{n}(p^{n} - 1)$ & $p^{n} - 1$ & $p^{n} - 1$ & $p^{n}$ & $p(p^{n} - 1)$ &   $M=KN+K$   & $p\ \text{is a prime}$ & \cite{HengWXZ2024new} \\
			\hline
			$p^{2n}$ & $p^{n} - 1$ & $p^{n} - 1$ & $p^{n}+1$ & $p$ &  $M=KN+K+N+1$     & $p\ \text{is a prime,}\ n \geq 1$ & \cite{WangHL2025} \\
			\hline
			$p^{2n}Q$ & $Q$ & $\frac{p^{2n}-1}{Q}$ & $p^{n}+1$ & $p$  & $M=K^2N+K$  & $p \geq 3$ is a prime, $n\geq 1$,
			$Q\mid p^{2n}-1$, $1< Q< p^{2n}-1$
			& Theorem \ref{Theorem: Construction by additive finite fields-optimal} \\
			\hline
			$(\Delta-1)Q$ & $Q$ & $\frac{p^{2n}-1}{Q}$ & $p^{n}+3$ & $\Delta $ (tunable)  &  
			$M=K^2N\cdot\frac{\Delta-1}{p^{2n}-1}$
			& $p$ is a prime, $n\geq 1$, 
			$Q\mid p^{2n}-1$, $1< Q< p^{2n}-1$, $\Delta\mid p^{2n}-1$, $\Delta>1$ & Theorem \ref{Theorem: Construction by additive and multiplicative finite fields-optimal} \\
			\hline			
		\end{tabular} 	
		\\
		$$u = \frac{p^{n}-p^{n-1}+(-1)^{(p-1)(n+3)/4}(p-1)p^{(n-1)/2}}{2},\qquad 
		v = \frac{p^{n}+p^{n-1}-(-1)^{(p-1)(n+3)/4}(p-1)p^{(n-1)/2}}{2}.$$
	\end{table}
	
	\begin{table}[ht]\label{Table-2}
		\centering
		\fontsize{5.5}{5}\selectfont 
		\caption{Asymptotically near-optimal QCSS constructions and their induced scaling laws}
		\begin{tabular}{|p{1.38cm}|p{1.2cm}|p{1cm}|p{1.1cm}|p{1.1cm}|p{3.5cm}|p{3cm}|p{1.05cm}|}
			\hline
			Set size $M$ & Flock size $K$ & Length $N$ & $\delta_{\max}$ & Alphabet Size & Scaling Law  & Parameter Conditions & References \\
			\hline
			$2^{n}$ & $2^{n - 1} - 1$ & $2(2^{n} - 1)$ & $2^{n} + 2^{\frac{n}{2}}$ & $4(2^{n} - 1)$ & $M=\frac{N}{2}+1$ & $n > 1$ & \cite{LiuPGB2013}  \\
			\hline
			$2^{2n}$ & $2^{n - 1} - 2^{\frac{n}{2}}$ & $2^{2n} - 1$ & $\leq w$ & $4(2^{n} - 1)$ & $M=N+1$ & $n \geq 3$ & \cite{LiYL2019} \\
			\hline
			$2^{n} - 1$ & $2^{n - 2}$ & $2^{n} - 1$ & $\leq 3\cdot 2^{n - 2}$ & $2(2^{n} - 1)$ & $M=N$ & $n > 1$ & \cite{LiTLX2018} \\
			\hline
			$2^{3n}$ & $2^{n}$ & $2^{n} - 1$ & $2^{n + 1}$ & $2$ & $M=K^2N+K^2$ & $n \geq 3$ & \cite{HengWXZ2024new}  \\
			\hline
			$p^{3n}$ & $p^{n} - 1$ & $p^{n} - 1$ & $2p^{n}$ & $p$ & $M=(K+1)^2(N+1)$ & $p\ \text{is an odd prime,}\ n \geq 1$ & \cite{HengWXZ2024new}  \\
			\hline			
			$p^{4n}Q$ & $Q$ & $\frac{p^{2n}-1}{Q}$ & $2p^{n}+1$ & $p$ & $M=K^3N^2+2K^2N+K$ & $p \geq 5$ is a prime, $n\geq 1$,
			$Q\mid p^{2n}-1$, $1< Q< p^{2n}-1$  & Theorem \ref{Theorem: Construction by additive finite fields-5} \\
			\hline
			$(\Delta-1)Qp^{2n}$ & $Q$ & $\frac{p^{2n}-1}{Q}$ & $2p^{n}+3$ & $\Delta p$ (tunable)  & 
			$M=(K^3N^2+K^2N)\cdot\frac{\Delta-1}{p^{2n}-1}$ 			
			& $p$ is a prime, $n\geq 1$, 
			$Q\mid p^{2n}-1$, $1< Q< p^{2n}-1$, $\Delta\mid p^{2n}-1$, $\Delta>1$ & Theorem \ref{Theorem: Construction by additive and multiplicative finite fields} \\
			\hline			
		\end{tabular}
		\\
		$$w= (1 + 2^{n})\sqrt{2^{n}+2^{n-2}-2^{n/2}}.$$
	\end{table}
	
	
	
	In this paper, we address the aforementioned problem from a geometric perspective. 
	Our key idea is to transform a QCSS into a complex unit-norm codebook through an induced representation, 
	thereby enabling the transfer of extremal results from high-dimensional geometry to sequence design.
	Such a transform allows us to reveal that the scalability of QCSSs is governed by geometric 
	density thresholds of the induced codebooks. In essence, when the number of codewords exceeds certain polynomial thresholds of the ambient dimension, the corresponding codebook becomes overly dense, under which the intrinsic geometric constraints force the correlation to increase. 
	
	Based on the aforementioned framework, we establish sharp scalability laws for QCSSs. We show
	that asymptotically optimal QCSSs satisfy a quadratic scaling law
	$M \le (1+o(1))K^2N$,
	while asymptotically near-optimal QCSSs obey a cubic scaling law
	$M \le (1+o(1))K^3N^2$ when $\rho < \frac{1+\sqrt{5}}{2}$.
	
	A second major contribution of this work is explicit
	constructions based on certain additive character and mixed (additive/multiplicative) character. Specifically, we construct asymptotically optimal and near-optimal QCSS families
	achieving
	\[
	M = K^2N + K
	\quad\text{and}\quad
	M = K^3N^2 + 2K^2N + K,
	\]
	respectively, thereby proving that the quadratic and cubic scaling laws are asymptotically tight.
	We further develop tunable QCSS families that preserve the same scaling exponents while
	allowing flexible parameter choices. In addition, explicit extremal examples are provided to show
	that the leading terms in the resulting correlation bounds are tight. 
	
	
	It is noted that our results suggest that these scaling laws are not artifacts of specific 
	constructions, but rather reflect intrinsic structural constraints of QCSSs. 
	In particular, the emergence of the cubic scaling law in the near-optimal regime indicates 
	the existence of a fundamental scalability barrier.
	This observation leads us to conjecture that the cubic scaling law is universal that any asymptotically near-optimal QCSSs satisfy 
	$M \leq (1 + o(1))K^3N^2$ for the entire range $1 < \rho \leq 2$.
	
	The rest of the paper is organized as follows. 
	Section II introduces the proposed geometric framework via induced codebooks and establishes 
	a general geometry-to-scalability relation for QCSSs. 
	Section III presents explicit constructions achieving the quadratic and cubic scaling laws. 
	Section IV extends these constructions to tunable families and demonstrates the robustness 
	of cubic scaling. Finally, Section V concludes the paper and discusses several open problems.

	\section{Geometric Framework via Induced Codebooks }
	
	This section presents a geometric framework to bridge QCSSs and complex-valued unit-norm codebooks. We will show that such a framework sheds some light for understanding the fundamental scaling laws of QCSSs. To this end, we first review the definitions of QCSSs and unit-norm codebooks as well as their associated bounds. 
	
	\subsection{Periodic QCSS and correlation parameters}
	
	Assume that $\mathbf{a} = (a(0), a(1), \cdots, a(N - 1))$ and $\mathbf{b} = (b(0), b(1), \cdots, b(N - 1))$ are two complex-valued sequences of length $N$. The periodic-correlation function between $\mathbf{a}$ and $\mathbf{b}$ is defined by
	\[
	R_{\mathbf{a},\mathbf{b}}(\tau) = \sum_{t = 0}^{N - 1} a(t)b^*(t + \tau), \quad 0 \leq \tau < N,
	\]
	where $x^*$ stands for the conjugate of the complex number $x$ and the argument addition is counted modulo $N$. If $\mathbf{a}=\mathbf{b}$, the periodic auto-correlation of $\mathbf{a}$ is obtained; Otherwise, $R_{\mathbf{a},\mathbf{b}}(\tau)$ refers to the periodic cross-correlation between $\mathbf{a}$ and $\mathbf{b}$.
	
	Let $\mathcal{S} = \{\mathbf{S}^{0}, \mathbf{S}^{1}, \cdots, \mathbf{S}^{M - 1}\}$ be a set of $M$ matrices, each having identical energy of $KN$ and order of $K \times N$, i.e., 
	\[
	\mathbf{S}^{m} = 
	\begin{bmatrix}
		\mathbf{s}_{0}^{m} \\
		\mathbf{s}_{1}^{m} \\
		\vdots \\
		\mathbf{s}_{K - 1}^{m}
	\end{bmatrix}
	= 
	\begin{bmatrix}
		s_{0,0}^{m} & s_{0,1}^{m} & \cdots & s_{0,N - 1}^{m} \\
		s_{1,0}^{m} & s_{1,1}^{m} & \cdots & s_{1,N - 1}^{m} \\
		\vdots & \vdots & \ddots & \vdots \\
		s_{K - 1,0}^{m} & s_{K - 1,1}^{m} & \cdots & s_{K - 1,N - 1}^{m}
	\end{bmatrix},
	\]
	where $\mathbf{s}_{k}^{m} = (s_{k,0}^{m}, s_{k,1}^{m}, \cdots, s_{k,N - 1}^{m})$ is the $k$-th constituent sequence of length $N$, $0 \leq k \leq K - 1$, $0 \leq m \leq M - 1$. The periodic correlation function between two matrices $\mathbf{S}^{m_1}$ and $\mathbf{S}^{m_2}$ is defined by
	\[
	R_{\mathbf{S}^{m_1}, \mathbf{S}^{m_2}}(\tau) = \sum_{k = 0}^{K - 1} R_{\mathbf{s}_{k}^{m_1}, \mathbf{s}_{k}^{m_2}}(\tau), \quad 0 \leq \tau < N,
	\]
	where $0 \leq m_1, m_2 \leq M - 1$. The maximum periodic auto-correlation magnitude and the maximum periodic cross-correlation magnitude of $\mathcal{S}$ are respectively defined by
	\begin{align*}
		\delta_{a} &= \max\{|R_{\mathbf{S}^{m}, \mathbf{S}^{m}}(\tau)|: 0 \leq m < M, 0 < \tau < N\}, \\
		\delta_{c} &= \max\{|R_{\mathbf{S}^{m_1}, \mathbf{S}^{m_2}}(\tau)|:  0 \leq m_1 \neq m_2 < M, 0 \leq \tau < N\}.
	\end{align*}
	Furthermore, the maximum periodic correlation magnitude of \(\mathcal{S}\) is defined as
	\[
	\delta_{\max} = \max\{\delta_{a}, \ \delta_{c}\}.
	\]
	Accordingly, \(\mathcal{S}\) is called a periodic \((M, K, N, \delta_{\max})\)-QCSS if \(\delta_{\max}\) is a small nonzero value (compared to the matrix energy). If \(\delta_{\max}=0\),  then $\mathcal{S}$ reduces to a periodic PCSS. As pointed out in Section I, $M$ and $K$ are called the set size and the flock size of a QCSS/PCSS, respectively. 
	
	Following~\cite{LiuPGB2013}, the fundamental lower bound on $\delta_{\max}$ is
	given by
	\begin{equation*}
		\delta_{\max} \ge \delta_{\mathrm{opt}}
		:= KN\sqrt{\frac{M/K-1}{MN-1}}.
	\end{equation*}
	
	To analyze scalability, we next connect QCSSs with complex unit-norm codebooks.
	
	\subsection{Unit-norm codebooks and geometric bounds}
	
	
	An $(U, V)$ codebook $\mathcal{C}$ is a set $\{\mathbf{c}_0, \mathbf{c}_1, \cdots, \mathbf{c}_{U-1}\}
	\subseteq \mathbb{C}^{V}$, where each $\mathbf{c}_i$, $0\leq i\leq U-1$, is a unit-norm $1\times V$ complex 
	vector over certain alphabet. The maximum inner-product magnitude $I_{\max}(\mathcal{C})$ of $\mathcal{C}$ is defined 
	by
	$$
	I_{\max}(\mathcal{C})=\max_{0\leq i\neq j\leq U-1}\left|\mathbf{c}_i\mathbf{c}_j^{H}\right|,
	$$
	where $\mathbf{c}_j^{H}$ denotes the conjugate transpose of $\mathbf{c}_j$. According to \cite{Welch1974}, 
	\begin{eqnarray}\label{Equation:Welch bound}
		I_{\max}(\mathcal{C})\geq I_{W}(\mathcal{C})=\sqrt{\frac{U-V}{(U-1)V}}, \quad\hbox{for} \quad U\geq V,
	\end{eqnarray}	
	where $I_{W}(\mathcal{C})$ is called the Welch bound of $\mathcal{C}$. 
	Strohmer and Heath \cite{StrohmerH2003} showed that there are no $(U, V)$ real-valued codebook meeting the Welch bound with equality if $U>\frac{V(V+1)}{2}$ and $(U, V)$ codebook achieving the Welch
	bound with equality if $U>V^2$. For a codebook where the number of vectors is
	significantly larger than the length of vectors, the Levenstein bound \cite{Levenshtein1982,Levenshtein1983}
	is useful for measuring the maximum inner-product magnitude.
	
	\begin{lemma}[Levenstein bound]\label{Lemma: Levenstein bound} For any real-valued codebook $\mathcal{C}$ with $U>\frac{V(V+1)}{2}$, 
		$$
		I_{\max}(\mathcal{C})\geq I_{L}(\mathcal{C})=\sqrt{\frac{3U-V^2-2V}{(V+2)(U-V)}};
		$$
		For any codebook $\mathcal{C}$ with $U>V^2$, 
		$$
		I_{\max}(\mathcal{C})\geq I_{L}(\mathcal{C})=\sqrt{\frac{2U-V^2-V}{(V+1)(U-V)}}.
		$$	
		
		Moreover, for codebook $\mathcal{C}$ with $U= V^3$ we have
		$$
		I_{\max}(\mathcal{C})\geq\sqrt{\frac{3(V+1)+\sqrt{(5V+1)(V+1)}}{2(V+1)(V+2)}}.
		$$
	\end{lemma}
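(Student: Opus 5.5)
Since this lemma is quoted from Levenshtein \cite{Levenshtein1982,Levenshtein1983}, the paper only needs to cite it. To re-derive it, I would use the Delsarte linear-programming method on the unit sphere (real case) or on complex projective space (complex case). For a codebook $\mathcal{C}$, put $x_{ij}=|\mathbf{c}_i\mathbf{c}_j^{H}|^2\in[0,1]$, so that $x_{ii}=1$ and $x_{ij}\le \alpha:=I_{\max}(\mathcal{C})^2$ for $i\neq j$.

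The basic tool is the family of zonal (Jacobi) polynomials $Q_k(x)$ of degree $k$ in $x$. These are $P_k^{(V-2,0)}(2x-1)$ in the complex case and the even Gegenbauer polynomials in the real case. They are positive definite, i.e. $\sum_{i,j}Q_k(x_{ij})\ge 0$ for every codebook. For $k=1,2$ this is elementary, and I would check it directly. It follows from
\[
\sum_{i,j}\Big|\mathbf{c}_i^{\otimes k}(\mathbf{c}_j^{\otimes k})^{H}\Big|^2\ \ge\ \frac{U^2}{\dim \mathrm{Sym}^k(\mathbb{C}^V)},
\]
which is the frame-potential argument for $k=1$ (giving Welch) and $k=2$, together with the explicit expansions of $Q_1,Q_2$.

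Now take any polynomial $f(x)=f_0+\sum_{k\ge1} f_kQ_k(x)$ with $f_0>0$, $f_k\ge 0$, and $f(x)\le 0$ on $[0,\alpha]$. Evaluating $\sum_{i,j}f(x_{ij})$ in two ways gives
\[
Uf(1)\ \ge\ \sum_{i,j}f(x_{ij})\ \ge\ U^2 f_0,
\qquad\text{so}\qquad U\le \frac{f(1)}{f_0}.
\]
For the first two bounds I would take $f(x)=(x-\alpha)(x+\gamma)$ with $\gamma\ge0$. This polynomial is nonpositive on $[0,\alpha]$. Expanding it in $Q_0,Q_1,Q_2$ and requiring $f_1\ge0$ fixes the admissible range of $\gamma$. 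Then I would optimize $\gamma$ at the boundary of that range, solve $U\le f(1)/f_0$ for $\alpha$, and recover $\alpha\ge \frac{2U-V^2-V}{(V+1)(U-V)}$. The hypothesis $U>V^2$ is exactly where this beats the degree-one (Welch) choice $f=x-\alpha$.

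The real case is the same computation with $\dim\mathrm{Sym}^2(\mathbb{R}^V)=V(V+1)/2$ and Gegenbauer weights. That replaces $V^2$ by $V(V+1)/2$, $2$ by $3$, and $V+1$ by $V+2$.

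For the third bound, with $U=V^3$, I would move to degree $3$: take $f(x)=(x-\alpha)(x-\beta)^2$, or $(x-\alpha)(x+\gamma)(x-\beta)$, with roots chosen as in Levenshtein's extremal polynomials. Imposing $f_1,f_2\ge0$ and optimizing, the equation for $\alpha$ becomes a quadratic. Its root $\frac{3(V+1)+\sqrt{(5V+1)(V+1)}}{2(V+1)(V+2)}$ should come out after substituting $U=V^3$.

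The main obstacle is this last step. It needs the explicit $Q_3$ and the positivity check $f_1,f_2\ge0$ for the chosen roots, and the optimization is messy. I would first follow Levenshtein's general recipe, taking $f(x)=(x-\alpha)\big(K_{2}(x,\alpha)\big)^2$-type adjacent-polynomial constructions, and then specialize to $V^3$ only at the end.
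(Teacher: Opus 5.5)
The paper gives no proof of this lemma. It cites Levenshtein, and it states the $U=V^3$ case without derivation. Your Delsarte linear-programming route is exactly the argument behind that citation, and it does produce all three inequalities.

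\textbf{The gap: positive definiteness of $Q_2$.} The step that fails as you justify it is the claim that $\sum_{i,j}Q_2(x_{ij})\ge 0$ follows from the $\mathrm{Sym}^2$ frame-potential inequality. In the complex case
\[
Q_1=x-\tfrac1V,\qquad Q_2=x^2-\tfrac{4}{V+2}x+\tfrac{2}{(V+1)(V+2)},\qquad x^2=Q_2+\tfrac{4}{V+2}Q_1+\tfrac{2}{V(V+1)}.
\]
So $\sum_{i,j}x_{ij}^2\ge 2U^2/(V(V+1))$ says only that $\sum_{i,j}\bigl(Q_2+\tfrac{4}{V+2}Q_1\bigr)(x_{ij})\ge 0$. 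For $f=x(x-\alpha)$ you are then left with
\[
\sum_{i,j}f(x_{ij})-U^2f_0=\sum_{i,j}\Bigl(Q_2+\tfrac{4}{V+2}Q_1\Bigr)(x_{ij})-\alpha\sum_{i,j}Q_1(x_{ij}),
\]
and the last term has the wrong sign. With only that inequality, the degree-two computation gives just the second-order Welch bound $\alpha^2\ge\frac{2U-V(V+1)}{V(V+1)(U-1)}$. As $U\to\infty$ this tends to $\sqrt{2/(V(V+1))}$ rather than $2/(V+1)$.

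The real case is worse. The correct zeroth coefficient uses $\mathbb{E}\langle u,v\rangle^4=\frac{3}{V(V+2)}$, not $1/\dim\mathrm{Sym}^2(\mathbb{R}^V)=\frac{2}{V(V+1)}$; the latter would simply reproduce the complex formula.

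You need genuine positivity of the zonal functions. It follows from the addition formula, since $Q_k$ is a positive multiple of the reproducing kernel of the $k$-th harmonic component. By hand, project $(\mathbf{c}_i^H\mathbf{c}_i)^{\otimes 2}$ onto its traceless part before taking the squared norm of the sum.

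\textbf{The quadratic step.} The constraint $f_1=\frac{4}{V+2}+\gamma-\alpha\ge 0$ is slack. The binding constraint is $\gamma\ge 0$, which is what makes $f\le 0$ on $[0,\alpha]$. Since $f(1)/f_0$ increases in $\gamma$ once $\alpha>\frac{1}{V+1}$, the optimum is $f=x(x-\alpha)$.

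\textbf{The cubic step.} Your plan works, but the kernel has degree one. It is $K_1$, not $K_2$; squaring $K_2$ would give degree $5$. Take $f=(x-\alpha)(x-\beta)^2$ with
\[
\beta=\frac{(V+2)\alpha-2}{(V+2)\bigl((V+1)\alpha-1\bigr)},
\]
which is the unconstrained maximiser of $f_0/f(1)$. This gives
\[
U\le\frac{V^2(V+1)(1-\alpha)\bigl((V+2)\alpha-1\bigr)}{4(V+1)\alpha-(V+1)(V+2)\alpha^2-2}
\]
on the $L_3$ range, where the bound runs from $\frac{V^2(V+1)}{2}$ at $\alpha=\frac{2}{V+2}$ up to $\frac{V^2(V+1)^2}{4}$. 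On that range $f_1,f_2\ge 0$ holds, by Levenshtein's theorem or directly from $Q_3=x^3-\frac{9}{V+4}x^2+\cdots$.

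At $U=V^3$ the inequality becomes
\[
(V-1)\bigl[(V+1)(V+2)\alpha^2-3(V+1)\alpha+1\bigr]\ge 0,
\]
whose larger root is exactly the stated value. To finish, note two facts. The bracket equals $-\frac{V}{V+2}$ at $\alpha=\frac{2}{V+2}$, and $V^3\le\frac{V^2(V+1)^2}{4}$. So if $I_{\max}^2$ were below that root, applying the polynomial with parameter $\max\{I_{\max}^2,\frac{2}{V+2}\}$ would force $U<V^3$, a contradiction.
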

	
	The following simple monotonicity property allows us to extend threshold
	results to larger codebooks.
	
	\begin{lemma}[Monotonicity]\label{lem:mono2}
		Let $\mathcal{C}'\subseteq \mathcal{C}\subseteq\mathbb{C}^V$ be unit-norm
		codebooks. Then
		\[
		I_{\max}(\mathcal{C})\ge I_{\max}(\mathcal{C}').
		\]
	\end{lemma}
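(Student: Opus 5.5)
The claim follows directly from the definition of $I_{\max}$ as a maximum over a finite index set. Write $\mathcal{C}=\{\mathbf{c}_0,\dots,\mathbf{c}_{U-1}\}$ and $\mathcal{C}'=\{\mathbf{c}_{i}: i\in J\}$ for some index subset $J\subseteq\{0,\dots,U-1\}$; this is possible because $\mathcal{C}'\subseteq\mathcal{C}$. I will assume $|J|\ge 2$, since otherwise $I_{\max}(\mathcal{C}')$ is a maximum over an empty set and is either undefined or taken to be $0$. In the latter case the inequality is trivial because $I_{\max}(\mathcal{C})\ge 0$.

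The key step is to compare the two sets over which the maxima are taken. Put
\[
A'=\{|\mathbf{c}_i\mathbf{c}_j^{H}|: i,j\in J,\ i\neq j\}, \qquad A=\{|\mathbf{c}_i\mathbf{c}_j^{H}|: 0\le i\neq j\le U-1\}.
\]
Every pair of distinct indices in $J$ is also a pair of distinct indices in $\{0,\dots,U-1\}$, so $A'\subseteq A$. Both sets are finite and nonempty, hence both maxima exist, and
\[
I_{\max}(\mathcal{C}')=\max A'\le \max A = I_{\max}(\mathcal{C}).
\]
Nothing about the unit-norm condition or the ambient dimension $V$ is used; the argument relies only on the fact that enlarging a finite set cannot decrease its maximum.

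There is no real obstacle here. The only point needing care is to treat codewords as indexed vectors rather than as a bare set, so that pairs $i\neq j$ are well defined. If $\mathcal{C}$ contains repeated vectors, a pair of repeated copies inside $J$ contributes inner product $1$ to both $A'$ and $A$, so the inclusion $A'\subseteq A$ still holds. In later use, this lemma lets any lower bound from Lemma~\ref{Lemma: Levenstein bound} that holds at a threshold size, for example a subcodebook of size exactly $V^3$, transfer to every larger codebook in the same dimension.
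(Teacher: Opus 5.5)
Your proof is correct: every pair of distinct codewords in $\mathcal{C}'$ is also a pair of distinct codewords in $\mathcal{C}$. Hence the maximum defining $I_{\max}(\mathcal{C}')$ is taken over a subset of the values defining $I_{\max}(\mathcal{C})$. The paper states this lemma without a written proof, treating it as immediate from the definition, and your argument is exactly that implicit reasoning; your treatment of the degenerate case $|\mathcal{C}'|<2$ is a harmless extra.
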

	
	A codebook $\mathcal{C}$ is referred to be asymptotically optimal if 
	$\lim\limits_{V\rightarrow\infty}\frac{I_{\max}(\mathcal{C})}{I_{W}(\mathcal{C})}=1$ for an $(U, V)$ codebook $\mathcal{C}$ with $U\geq V$,
	or $\lim\limits_{V\rightarrow\infty}\frac{I_{\max}(\mathcal{C})}{I_{L}(\mathcal{C})}=1$ for a real-valued codebook $\mathcal{C}$ with $U>\frac{V(V+1)}{2}$ or a complex codebook $\mathcal{C}$ with $U>V^2$. In general, it is very hard to construct asymptotically optimal codebooks. Up to now, the optimal codebooks were constructed from Kerdock codes \cite{ConwayHS1996}, binary codes \cite{XiangDM2015}, planar functions \cite{DingY2007}, bent functions \cite{ZhouDL2014,HengY2017,WangY2020}, additive characters \cite{TianLLX2019,HanSYW2020,LiuCWYJ2025},
	multiplicative characters \cite{HongPNHK2014,HengDY2017,Heng2018}, mixed characters \cite{TanZZ2016,SunHYY2021},
	difference sets \cite{XiaZG2005,Ding2006,DingF2007,ZhangF2012,HuW2014}, hyper Eisenstein sum \cite{LuoC2018},
	Fourier and Hadamard matrices \cite{Yu2012}, respectively.
	
	In the sequel, let us establish a direct connection between QCSSs and codebooks
	through an induced representation.
	
	\subsection{From Geometry to Scalability  }
	
	We introduce the following induced codebook representation.
	
	Let $\mathcal{S} = \{\mathbf{S}^{0}, \mathbf{S}^{1}, \cdots, \mathbf{S}^{M - 1}\}$ be an $(M, K, N, \delta_{max})$-QCSS. Define 
	$$
	\mathcal{C}=\left\{\mathbf{c}_{m, \tau}: \ 0\leq m\leq M-1, \ 0\leq \tau\leq N-1\right\},
	$$
	where 
	\begin{eqnarray*}
		\mathbf{c}_{m, \tau}&=&\frac{1}{\sqrt{KN}}\bigg(s^{m}_{0, 0+\tau}, s^{m}_{0, 1+\tau}, \cdots, s^{m}_{0, N-1+\tau}, 
		s^{m}_{1, 0+\tau}, s^{m}_{1, 1+\tau}, \cdots, s^{m}_{1, N-1+\tau}, \\
		&&\qquad\qquad \cdots, s^{m}_{K-1, 0+\tau}, s^{m}_{K-1, 1+\tau}, \cdots, s^{m}_{K-1, N-1+\tau}
		\bigg).
	\end{eqnarray*}
	We have
	$$
	\mathbf{c}_{m_1, \tau_1}\mathbf{c}_{m_2, \tau_2}^{H}=\frac{1}{KN}R_{\mathbf{S}^{m_1}, \mathbf{S}^{m_2}}(\tau_2-\tau_1).
	$$ 
	Thus, the induced codebook has parameters
	\begin{equation}\label{eq:induced-main}
		U = MN, \quad V = KN, \quad I_{\max}(C) = \frac{\delta_{\max}}{KN}.
	\end{equation}
	This correspondence enables us to transfer extremal results from high-dimensional geometry to QCSS scalability.
	
	
	\begin{theorem}[Geometry-to-Scalability Transfer Principle]\label{thm:unified-main}
		Let $t\ge 2$ and suppose that for any fixed $\varepsilon>0$, every
		unit-norm codebook $\mathcal{C}\subseteq\mathbb{C}^V$ with
		$U> V^t$ satisfies
		\[
		I_{\max}(\mathcal{C})
		\ge
		\frac{\Lambda_t+o(1)}{\sqrt{V}}
		\]
		for some constant $\Lambda_t>1$.
		Let $\mathcal{S}$ be an asymptotically near-optimal periodic QCSS with
		tightness factor $\rho$. If $\rho<\Lambda_t$, then
		\[
		M\le (1+o(1))K^tN^{t-1}.
		\]
	\end{theorem}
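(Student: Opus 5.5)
We argue by contradiction through the induced codebook of \eqref{eq:induced-main}. Suppose that along the sequence of QCSSs under consideration we have $M>(1+\eta)K^tN^{t-1}$ for some fixed $\eta>0$, infinitely often. Then the induced codebook $\mathcal{C}$ has $U=MN>(1+\eta)K^tN^t=(1+\eta)V^t>V^t$ with $V=KN$. (If two induced codewords coincide, say $\mathbf{c}_{m,\tau}=\mathbf{c}_{m',\tau'}$, then $\delta_{\max}=KN$, which already contradicts $\rho$ bounded, since $\delta_{\mathrm{opt}}\approx\sqrt{KN}\cdot\sqrt{K}$ is $o(KN)$ in the regime of interest. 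So $\mathcal{C}$ is a genuine set of $U$ distinct unit vectors.) Lemma~\ref{lem:mono2} lets us pass to a subcodebook of size just above $V^t$ if the hypothesis is phrased only near the threshold. The hypothesis then gives
\[
\frac{\delta_{\max}}{KN}=I_{\max}(\mathcal{C})\ge\frac{\Lambda_t+o(1)}{\sqrt{KN}},
\qquad\text{i.e.}\qquad \delta_{\max}\ge(\Lambda_t+o(1))\sqrt{KN}.
\]

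Next we compare with $\delta_{\mathrm{opt}}$. We have
\[
\delta_{\mathrm{opt}}=KN\sqrt{\frac{M/K-1}{MN-1}}=\sqrt{KN}\,\sqrt{\frac{KN(M/K-1)}{MN-1}}=\sqrt{KN}\,\sqrt{\frac{MN-KN}{MN-1}}.
\]
Since $M\ge K^tN^{t-1}$ with $t\ge2$ forces $M/K\to\infty$ as $N\to\infty$ (indeed $M/K\ge K^{t-1}N^{t-1}$), the ratio $(MN-KN)/(MN-1)\to1$. Hence $\delta_{\mathrm{opt}}=(1+o(1))\sqrt{KN}$, and therefore
\[
\rho=\frac{\delta_{\max}}{\delta_{\mathrm{opt}}}\ge\Lambda_t+o(1).
\]
Because $\rho$ is the (limiting) tightness factor of the family and $\rho<\Lambda_t$ is fixed, this is a contradiction for large $N$. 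Thus $\limsup M/(K^tN^{t-1})\le1$, which is the claimed bound $M\le(1+o(1))K^tN^{t-1}$.

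The main subtlety is interpreting the asymptotics uniformly. The $o(1)$ in the hypothesis is as $V\to\infty$, so we need $V=KN\to\infty$, which holds since $N\to\infty$. The "$\rho$" of an asymptotically near-optimal family should be read as $\limsup\delta_{\max}/\delta_{\mathrm{opt}}$. We also need the hypothesis to hold for all $U>V^t$, not only $U\approx V^t$; if it is only known at $U$ slightly above $V^t$, Lemma~\ref{lem:mono2} transfers it upward. We expect the delicate point to be this bookkeeping of the $o(1)$ terms, and making precise that the contradiction yields $M\le(1+\eta)K^tN^{t-1}$ eventually for every $\eta>0$. The algebra itself is routine.
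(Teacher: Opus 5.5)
Your argument is correct and is essentially the paper's proof. You pass to the induced codebook with $U=MN$, $V=KN$, apply the density hypothesis once $U>V^t$, and contradict $\delta_{\max}=(\rho+o(1))\sqrt{KN}$ with $\rho<\Lambda_t$. Along the way you supply the justification $\delta_{\mathrm{opt}}=(1+o(1))\sqrt{KN}$ that the paper leaves implicit. You also use the correct threshold $M>(1+\eta)K^tN^{t-1}$, whereas the paper writes $M>(KN)^t$, which taken literally would only give $M\le K^tN^t$.

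One harmless slip: in your parenthetical on coinciding codewords, $\delta_{\mathrm{opt}}\approx\sqrt{KN}$, not $\sqrt{KN}\cdot\sqrt{K}$. The conclusion $\delta_{\mathrm{opt}}=o(KN)$ is unaffected.
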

	
	\begin{proof}	The induced codebook satisfies
		\[
		U=MN,\qquad V=KN.
		\]
		If $M>(KN)^t$, then $U>V^t$,
		and hence
		\[
		I_{\max}(\mathcal{C})
		\ge
		\frac{\Lambda_t+o(1)}{\sqrt{V}}.
		\]
		On the other hand, the asymptotic near-optimality implies
		\[
		\delta_{\max}
		=
		\sqrt{KN}\,(\rho+o(1)),
		\]
		and therefore, by \eqref{eq:induced-main},
		\[
		I_{\max}(\mathcal{C})
		=
		\frac{\rho+o(1)}{\sqrt{V}},
		\]
		which contradicts $\rho<\Lambda_t$.
	\end{proof}
	
	\textit{Theorem~\ref{thm:unified-main}} establishes a geometry-to-scalability transfer mechanism: once the induced codebook enters a high-density regime $U > V^t$, intrinsic geometric constraints enforce a corresponding upper bound on the QCSS set size,
	\[
	M \lesssim K^t N^{t-1}.
	\]
	In this sense, scalability limits of QCSSs are not artifacts of specific constructions, but are dictated by universal geometric thresholds of the ambient codebook.
	
	This reduction shifts the study of QCSS scalability to the extremal geometry of unit-norm codebooks. In particular, known geometric bounds at different density regimes directly translate into distinct polynomial scaling laws for QCSSs, as will be made explicit in the next subsection.
	
	
	
	\subsection{Consequences of geometric thresholds}
	
	We now instantiate \textit{Theorem~\ref{thm:unified-main}} using the known
	second- and third-order geometric thresholds.
	
	\begin{theorem}[Quadratic and cubic scalability]\label{thm:23-final}
		Let $\mathcal{S}$ be an asymptotically near-optimal periodic QCSS.
		
		\begin{enumerate}
			\item If $\rho<\sqrt{2}$, then
			\[
			M\le
			\left(\frac{1}{2-\rho^2}+o(1)\right)K^2N.
			\]
			
			\item If $\rho<\frac{1+\sqrt{5}}{2}$, then
			\[
			M\le (1+o(1))K^3N^2,
			\]
			where the constant $\frac{1+\sqrt{5}}{2}$ arises from the extremal behavior of the Levenshtein bound
			at the cubic threshold $U = V^3$.		
		\end{enumerate}
	\end{theorem}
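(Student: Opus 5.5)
We work throughout with the induced codebook $\mathcal{C}$ of \eqref{eq:induced-main}: $U=MN$, $V=KN$ and $I_{\max}(\mathcal{C})=\delta_{\max}/(KN)$. First we normalize $\delta_{\mathrm{opt}}$. We have
\[
\frac{\delta_{\mathrm{opt}}^2}{(KN)^2}=\frac{M/K-1}{MN-1}=\frac{1}{KN}\Bigl(1-\frac{K}{M}\Bigr)\Bigl(1-\frac{1}{MN}\Bigr)^{-1}.
\]
So whenever $K/M\to 0$ (and $N\to\infty$), this gives $\delta_{\mathrm{opt}}=(1+o(1))\sqrt{KN}$, hence $I_{\max}(\mathcal{C})=(\rho+o(1))/\sqrt{V}$. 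If instead $M=O(K)$, both claimed bounds hold trivially, because $K^2N$ and $K^3N^2$ are much larger than $K$. We may therefore assume $K/M\to 0$ and use this normalization. This is the step where care is needed: the $o(1)$ terms must be uniform, and we must check that the regime where the Levenshtein bounds apply is exactly the regime in which $M\gg K$.

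For part 1, suppose $M>K^2N$; otherwise the bound holds, since $1/(2-\rho^2)\ge 1$. Then $U=MN>(KN)^2=V^2$, so the complex Levenshtein bound of Lemma~\ref{Lemma: Levenstein bound} applies. Write $U=\alpha V^2$ with $\alpha>1$. Then
\[
I_L^2=\frac{2U-V^2-V}{(V+1)(U-V)}=\frac{2\alpha V^2-V^2-V}{(V+1)(\alpha V^2-V)}=\frac{1}{V}\Bigl(2-\frac{1}{\alpha}+o(1)\Bigr),
\]
uniformly in $\alpha\ge1$ as $V\to\infty$. Combining this with $I_{\max}(\mathcal{C})=(\rho+o(1))/\sqrt{V}$ gives $\rho^2+o(1)\ge 2-1/\alpha$. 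Since $\rho<\sqrt2$, this rearranges to $\alpha\le 1/(2-\rho^2)+o(1)$. In other words, $MN\le \bigl(\frac{1}{2-\rho^2}+o(1)\bigr)K^2N^2$, which is the claim after dividing by $N$.

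For part 2, we apply Theorem~\ref{thm:unified-main} with $t=3$, and it suffices to exhibit $\Lambda_3=\frac{1+\sqrt5}{2}$. Let $\mathcal{C}$ be any codebook with $U>V^3$. Choose a subcodebook $\mathcal{C}'\subseteq\mathcal{C}$ with exactly $V^3$ vectors. By Lemma~\ref{lem:mono2} and the $U=V^3$ case of Lemma~\ref{Lemma: Levenstein bound},
\[
I_{\max}(\mathcal{C})\ge I_{\max}(\mathcal{C}')\ge\sqrt{\frac{3(V+1)+\sqrt{(5V+1)(V+1)}}{2(V+1)(V+2)}}=\sqrt{\frac{3+\sqrt5+o(1)}{2V}}.
\]
Since $\bigl(\frac{1+\sqrt5}{2}\bigr)^2=\frac{3+\sqrt5}{2}$, the right-hand side equals $(\Lambda_3+o(1))/\sqrt V$ with $\Lambda_3=\frac{1+\sqrt5}{2}>1$. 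Theorem~\ref{thm:unified-main} then gives $M\le(1+o(1))K^3N^2$ whenever $\rho<\Lambda_3$.

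The only delicate points are the asymptotic normalization of $\delta_{\mathrm{opt}}$ from the first paragraph and the uniformity of the expansion of $I_L^2$ in $\alpha$. Both are routine estimates once $K/M\to0$ has been reduced to.
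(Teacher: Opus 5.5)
Your proposal is correct and follows essentially the same route as the paper. Part 1 combines $I_{\max}(\mathcal{C})=(\rho+o(1))/\sqrt{V}$ with the complex Levenshtein bound for $U>V^2$, and part 2 takes a size-$V^3$ subcodebook, applies the $U=V^3$ Levenshtein bound and Lemma~\ref{lem:mono2}, and invokes Theorem~\ref{thm:unified-main} with $\Lambda_3=\frac{1+\sqrt5}{2}$. Two details are more careful than the paper's contradiction argument, which only considers growth of the form $M=(c+o(1))K^2N$: your expansion is uniform over all $U=\alpha V^2$ with $\alpha>1$, and you check explicitly that $\delta_{\mathrm{opt}}=(1+o(1))\sqrt{KN}$ once $M\gg K$.
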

	
	\begin{proof}
		Let $\mathcal{S}$ be an asymptotically near-optimal $(M,K,N,\delta_{\max})$-QCSS. 
		By the standard QCSS–codebook correspondence, $\mathcal{S}$ induces a complex codebook $\mathcal{C}$ with parameters
		\[
		U = MN, \qquad V = KN,
		\]
		and
		\[
		I_{\max}(\mathcal{C}) = \frac{\delta_{\max}}{KN}.
		\]	
		Since $\mathcal{S}$ is asymptotically near-optimal and $M$ grows polynomially in $KN$, 
		\[
		\delta_{\max} = \delta_{\mathrm{opt}}(\rho+o(1)), 
		\quad 
		\delta_{\mathrm{opt}} = KN \sqrt{\frac{M/K - 1}{MN - 1}}.
		\]
		
		i) Assume for contradiction that $M = (c+o(1))K^2N$ with $c=\frac{1}{2-\rho^2}+\epsilon$ for some fixed $\epsilon>0$. Then
		\[
		\delta_{\max} = \sqrt{KN}(\rho+o(1)),
		\quad\text{and hence}\quad
		I_{\max}(\mathcal{C}) = \frac{1}{\sqrt{KN}}(\rho+o(1)).
		\]
		Applying the Levenshtein bound (\textit{Lemma \ref{Lemma: Levenstein bound}}) for codebooks with $U > V^2$ yields
		\[
		I_{\max}(\mathcal{C}) \ge 
		\frac{1}{\sqrt{KN}}\left(\sqrt{\frac{2c-1}{c}} + o(1)\right).
		\]
		
		Since $\sqrt{\frac{2c-1}{c}}=\sqrt{2-\frac{1}{c}} > \rho$, this contradicts the above asymptotic equality. 
		Therefore, it is necessary that
		\[
		M\leq \left(\frac{1}{2-\rho^2}+o(1)\right)K^2N.
		\]
		
		ii) Assume that $M = (1+\epsilon+o(1))K^3N^2$ for some fixed $\epsilon>0$. 
		Then $U>V^3$. By selecting a subcode $\mathcal{C}'$ of size $V^3$ and applying \textit{Lemma \ref{Lemma: Levenstein bound}},
		we obtain 
		\[
		I_{\max}(\mathcal{C}')\geq 
		\frac{\frac{1+\sqrt{5}}{2}+o(1)}{\sqrt{V}}.
		\]
		Then from \textit{Lemma \ref{lem:mono2}}, this lower bound extends to all $U>V^3$, i.e.,
		\[
		I_{\max}(\mathcal{C})\geq 	I_{\max}(\mathcal{C}')\geq
		\frac{\frac{1+\sqrt{5}}{2}+o(1)}{\sqrt{V}}.
		\]
		Thus from \textit{Theorem \ref{thm:unified-main}} we know that $	M\le (1+o(1))K^3N^2$ for $\rho<\dfrac{1+\sqrt{5}}{2}$.
	\end{proof}
	
	\begin{corollary}[Scaling exponent]
		Let $M \asymp K^{\gamma}N^{\gamma-1}$. Then
		\begin{itemize}
			\item $\gamma \le 2$ for asymptotically optimal QCSS;
			\item $\gamma \le 3$ for $\rho < \frac{1+\sqrt{5}}{2}$.
		\end{itemize}
	\end{corollary}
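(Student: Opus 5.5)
The corollary follows from Theorem~\ref{thm:23-final} once we compare exponents. Write the hypothesis $M \asymp K^{\gamma}N^{\gamma-1}$ as $M = \Theta\big(K^{\gamma}N^{\gamma-1}\big)$, with $KN\to\infty$ in the asymptotic regime. Note that $K^{\gamma}N^{\gamma-1} = K\,(KN)^{\gamma-1}$. Hence the ratio of $M$ to the benchmark $K^tN^{t-1}=K(KN)^{t-1}$ is of order $(KN)^{\gamma-t}$.

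\textbf{First bullet.} An asymptotically optimal QCSS has $\rho\to 1$. In particular $\rho<\sqrt2$ eventually, so item i) of Theorem~\ref{thm:23-final} applies. Since $\frac{1}{2-\rho^2}\to 1$, it gives
\[
M\le (1+o(1))K^2N .
\]
If $\gamma>2$, then $M/(K^2N)$ is of order $(KN)^{\gamma-2}\to\infty$. This contradicts the bounded ratio, so $\gamma\le 2$.

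\textbf{Second bullet.} For $\rho<\frac{1+\sqrt5}{2}$, item ii) gives $M\le(1+o(1))K^3N^2$. If $\gamma>3$, then $M/(K^3N^2)$ is of order $(KN)^{\gamma-3}$, which is unbounded. Again this is a contradiction, so $\gamma\le 3$.

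The only point needing care is the meaning of ``$\asymp$'' and of the limit. Comparability up to constants is enough here, because any positive power gap $(KN)^{\gamma-t}$ eventually beats every constant. For this we need $KN\to\infty$, which is exactly the regime in which the $o(1)$ terms of Theorem~\ref{thm:23-final} are meaningful. Beyond this, there is no real obstacle.
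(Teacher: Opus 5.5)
Your proof is correct and follows the paper's route. The paper gives no separate argument and treats the corollary as an immediate consequence of Theorem~\ref{thm:23-final}; your identity $K^{\gamma}N^{\gamma-1}/(K^{t}N^{t-1})=(KN)^{\gamma-t}$, which is unbounded for $\gamma>t$ once $KN\to\infty$, is exactly the exponent comparison the paper leaves implicit (and only boundedness of the constant in item i) is needed, so $\frac{1}{2-\rho^{2}}\to 1$ is not essential).
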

	
	The bounds in \textit{Theorem~2} are asymptotically tight. In particular, the constructions
	presented in Section~III achieve
	\[
	M = K^2N + K
	\quad \text{and} \quad
	M = K^3N^2 + 2K^2N + K,
	\]
	corresponding to the quadratic and cubic regimes, respectively.
	
	These results reveal a clear structural dichotomy between the optimal and near-optimal
	regimes: quadratic scaling governs asymptotically optimal QCSS, while cubic scaling
	emerges as the fundamental growth rate in the near-optimal regime.
	
	More importantly, the cubic upper bound in \textit{Theorem~2} is derived from classical
	two-point geometric bounds via the third-order threshold $U>V^3$. This naturally
	raises the question of whether the cubic scaling law reflects an intrinsic limitation
	of QCSS, or merely an artifact of the current analytical techniques.
	
	The cubic scaling law established in \textit{Theorem 2} is supported by several layers of evidence.
	
	First, from a theoretical perspective, \textit{Theorem 2} shows that the cubic upper bound
	\[
	M \le (1+o(1))K^3N^2
	\]
	already holds rigorously for all asymptotically near-optimal QCSSs with
	\[
	\rho < \frac{1+\sqrt{5}}{2}.
	\]
	This demonstrates that cubic scaling is not merely heuristic, but arises from
	a genuine geometric threshold of the induced codebooks.
	
	Second, from a constructive viewpoint, the explicit families developed in
	Section III achieve
	\[
	M = K^3N^2 + 2K^2N + K,
	\]
	thereby matching the cubic scaling law asymptotically. Moreover, the tunable
	constructions in Section IV further show that this cubic scaling behavior is
	robust across a broad class of parameter choices, rather than being tied to a
	specific algebraic structure.
	
	Third, despite extensive existing constructions of QCSSs, no example exceeding
	the cubic scaling $M \asymp K^3N^2$ is currently known, even when allowing the
	full near-optimal range $1 < \rho \le 2$.
	
	Taken together, these theoretical, constructive, and empirical observations
	strongly suggest that the cubic scaling law reflects an intrinsic limitation
	of QCSSs, rather than an artifact of current analytical techniques.
	
	This motivates the following conjecture.
	
	\begin{conjecture}[Cubic universality]\label{conj:cubic}
		Any asymptotically near-optimal QCSS with $\rho \le 2$ satisfies
		\[
		M \le (1+o(1))K^3N^2.
		\]
	\end{conjecture}
	
	This conjecture extends \textit{Theorem~2} beyond the range $\rho < \frac{1+\sqrt{5}}{2}$,
	and asserts that the cubic scaling law is universal across the entire near-optimal
	regime. Establishing or refuting this conjecture remains a central open problem.
	
	\section{Additive-Character Constructions Achieving Quadratic and Cubic Scaling}\label{Section: Constructions-Additive-Character}
	
	In this section, we present explicit constructions that achieve 
	$$
	M=K^2N+K \quad\text{and}\quad M=K^3N^2+2K^2N+K,
	$$
	respectively, thereby beating the upper bound in \textit{Theorem \ref{thm:23-final}} whilst supporting
	\textit{Conjecture \ref{conj:cubic}}.

	\subsection{Preliminaries on Character Sums}
	
	For a prime $p$ and a positive integer $n$, let $q = p^n$ and $\zeta_p=e^{2\pi\sqrt{-1}/p}$ be the primitive $p$-th root of complex unity.
	Let $\mathbb{F}_q$ denote the finite field with $q$ elements. An additive character of $\mathbb{F}_q$ is defined as a homomorphism $\chi$ from $\mathbb{F}_q$ to $\mathbb{C}^*$ such that $$\chi(x + y) = \chi(x)\chi(y)$$
	for any $x,y \in \mathbb{F}_q$, and the complex conjugate $\chi^{*}$ of $\chi$ satisfies
	$\chi^{*}(x) = \chi(-x)$ for $x \in \mathbb{F}_q$. 	
	Let $\hbox{Tr}$ be the trace function of $\mathbb{F}_{p^{n}}$ on $\mathbb{F}_p$ defined by
	$$
	\hbox{Tr}(x)=x+x^{p}+x^{p^2}+\cdots+x^{p^{n-1}}\in \mathbb{F}_{p}, \qquad x\in \mathbb{F}_{p^{n}} .
	$$
	Then 
	$$
	\chi_{a}(x)=\zeta_{p}^{\hbox{Tr}(ax)}
	$$
	denotes an additive character for each $a\in \mathbb{F}_{p^{n}}$. 
	
	A multiplicative character of $\mathbb{F}_q$ is defined as a homomorphism $\psi$ 
	from $\mathbb{F}_q^*$ to $\mathbb{C}^*$ such that 
	$$
	\psi(xy) = \psi(x)\psi(y)
	$$
	for any $x,y \in \mathbb{F}_q^*$, and the complex conjugate ${\psi}^{*}$ of a multiplicative character $\psi$ satisfies  ${\psi}^{*}(x) = 
	\psi(x^{-1})$ for $x \in \mathbb{F}_q^*$. Let $\alpha$ be a primitive element of $\mathbb{F}_q$ and let $\zeta_{q-1}$ be the primitive $(q-1)$-th root of complex unity.  Then
	$$
	\psi_j(\alpha^k) = \zeta_{q - 1}^{jk}, \qquad k = 0, 1, \cdots, q-2
	$$
	defines a multiplicative character for each $j = 0, 1, \cdots, q-2$. 
	The order of $\psi_j$ is $d$ for $d\mid q-1$ if and only if $\gcd(j, q-1)=\frac{q-1}{d}$.
	
	The following classical results will be repeatedly used in the correlation analysis.
	
	\begin{lemma}[Mixed character sum bound]\label{Lemma:Estimate-additive-multiplicative-character} Let $\mathbb{F}_{q}$ be a finite field,
		$\chi$ be a non-trivial additive character of $\mathbb{F}_{q}$ and let $\psi$ be a non-trivial multiplicative character of $\mathbb{F}_{q}$ with $\textup{ord}(\psi)=d>1$. Suppose that $f\in \mathbb{F}_q[x]$ 
		is not, up to a nonzero multiplicative constant, a $d$-th power in $\mathbb{F}_q[x]$. Then for any
		polynomial $h\in\mathbb{F}_{q}[x]$ we have
		$$
		\left|\sum_{z\in\mathbb{F}_{q}}\psi(f(z))\chi(h(z))\right|\leq \left(\deg(h)+m-1\right)q^{1/2}, 
		$$	
		where $m$ denotes the number of distinct zeros of $f$ in $\overline{\mathbb{F}}_q$.
	\end{lemma}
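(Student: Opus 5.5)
The bound in \textit{Lemma~\ref{Lemma:Estimate-additive-multiplicative-character}} is the classical Weil estimate for mixed character sums. I would prove it with the $L$-function method and not by elementary means. Throughout I use the convention $\psi(0)=0$, so the zeros of $f$ contribute nothing to the sum.

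First I normalize $h$. Replacing $h$ by $h+g^p-g+c$ for any $g\in\mathbb{F}_q[x]$ and constant $c$ leaves $|\chi(h(z))|$-weighted sums unchanged up to a unimodular factor. Indeed, $\chi(g(z)^p)=\chi(g(z))$ because $\textup{Tr}(a x^p)=\textup{Tr}(a^{1/p}x)$ after a harmless rescaling of the character. So I may assume that either $h$ is constant or $p\nmid\deg(h)$, and this normalization can only lower $\deg(h)$. Next, writing $f=c\prod_{i=1}^m(x-\beta_i)^{e_i}$, I may reduce each $e_i$ modulo $d$. The hypothesis that $f$ is not a constant times a $d$-th power then guarantees that some $e_i\not\equiv 0\pmod d$.

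For the main argument, I define for $r\ge1$ the lifted sums
\[
S_r=\sum_{z\in\mathbb{F}_{q^r}}\psi(N_r(f(z)))\,\chi(\textup{Tr}_r(h(z))),
\]
where $N_r$ and $\textup{Tr}_r$ are the norm and trace from $\mathbb{F}_{q^r}$ to $\mathbb{F}_q$. I then form $L(T)=\exp\big(\sum_{r\ge1}S_rT^r/r\big)$. Interpreting $S_r$ as a twisted point count on the abelian cover $y^d=f(x)$, $w^p-w=h(x)$ of $\mathbb{P}^1$, equivalently as a trace of Frobenius on the rank-one lisse sheaf $\mathcal{L}_\psi(f)\otimes\mathcal{L}_\chi(h)$ on $U=\mathbb{A}^1\setminus\{\beta_1,\dots,\beta_m\}$, shows that $L(T)$ is a polynomial. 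The nontriviality conditions kill $H^0$ and $H^2_c$. Its degree is $-\chi_c(U,\mathcal{F})$, and by the Grothendieck--Ogg--Shafarevich formula this equals
\[
-\chi_c(U)+\sum_{x\notin U}\mathrm{Sw}_x=(m-1)+\deg(h).
\]
Here each $\beta_i$ is tamely ramified and so contributes Swan conductor $0$, while $\infty$ contributes $\deg(h)$ when $p\nmid\deg(h)$. If $\infty$ happens to be unramified, or if the degree drops, the degree is only smaller, which is harmless.

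Writing $L(T)=\prod_{j=1}^{D}(1-\omega_jT)$ with $D\le\deg(h)+m-1$ gives $S_1=-\sum_j\omega_j$. Weil's Riemann hypothesis for curves, or Deligne's purity, yields $|\omega_j|\le q^{1/2}$. Summing over the $D$ roots gives the claimed bound $(\deg(h)+m-1)q^{1/2}$.

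The main obstacle is the exact degree computation: one must verify that the cohomology is concentrated in degree one and compute the Swan conductor at $\infty$. I would handle this by citing the standard results of Weil, as presented in Schmidt's or Wan's treatment of mixed character sums, since an elementary proof of the full Riemann hypothesis is beyond the scope of this paper. In the paper I would therefore state the lemma with a reference and only indicate the reduction steps above.
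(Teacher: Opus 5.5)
Your proposal is correct and in substance the same as the paper, which gives no argument of its own and simply cites \cite[Lemma 2.2]{NiederreiterW2002}. Your $\ell$-adic sketch is the standard Weil argument behind that citation: on $U=\mathbb{A}^1\setminus\{\beta_1,\dots,\beta_m\}$ only $H^1_c$ survives because $f$ is not a constant times a $d$-th power, Grothendieck--Ogg--Shafarevich gives rank $m-1+\deg(h)$ after the Artin--Schreier reduction of $h$, and purity gives $|\omega_j|\le q^{1/2}$.

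The only slips are cosmetic. First, $\chi(y^p)=\chi(y)$ holds for the canonical character $\chi_1$ but not for $\chi=\chi_1(a\,\cdot)$ with $a\notin\mathbb{F}_p$, so the normalization should read $h\mapsto h+a^{-1}(G^p-G)+c$, which still lowers $\deg(h)$ as you need. Second, reducing the $e_i$ modulo $d$ is unnecessary, because removing all $m$ roots in $U$ already makes the point count exact.
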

	
	\begin{proof}
		See \cite[Lemma 2.2]{NiederreiterW2002}.
	\end{proof}
	
	\begin{lemma}[Additive character sum bound]\label{Lemma:Estimate-additive-character} Let $\mathbb{F}_{q}$ be a finite field, $h\in\mathbb{F}_{q}[x]$ be of degree $d\geq 1$
		with $\gcd(d, q)=1$ and let $\chi$ be a non-trivial additive character of $\mathbb{F}_{q}$. Then we
		have
		$$
		\left|\sum_{z\in\mathbb{F}_{q}}\chi(h(z))\right|\leq (d-1)q^{1/2}.
		$$	
	\end{lemma}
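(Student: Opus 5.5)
The plan is to prove the bound through the $L$-function attached to the sum and then invoke Weil's Riemann hypothesis for curves over finite fields; this is the classical route of Lidl--Niederreiter, Theorem~5.38. First I reduce to a single character. Every non-trivial additive character of $\mathbb{F}_q$ has the form $\chi_a$ with $a\neq 0$, and $\chi_a(h(z))=\chi_1(ah(z))$. Since $ah$ has the same degree $d$ as $h$, it suffices to treat the canonical character $\chi_1$.

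For each $k\ge1$ put
\[
S_k=\sum_{z\in\mathbb{F}_{q^k}}\chi_1\big(\mathrm{Tr}_{q^k/q}(h(z))\big),
\]
so that $S_1$ is the sum in question, and define $L(T)=\exp\big(\sum_{k\ge1}S_kT^k/k\big)$.

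The first step is to show that $L(T)$ is a polynomial of degree at most $d-1$ with constant term $1$. For a monic $g\in\mathbb{F}_q[x]$ with roots $\beta_1,\ldots,\beta_j$ in $\overline{\mathbb{F}}_q$, set $\lambda(g)=\chi_1\big(\sum_i h(\beta_i)\big)$. The sum $\sum_i h(\beta_i)$ lies in $\mathbb{F}_q$ and is a polynomial in the coefficients of $g$, via Newton's identities. Moreover $\lambda$ is multiplicative. The usual Euler product argument, summing over powers of monic irreducibles, then gives
\[
L(T)=\sum_{j\ge0}\Big(\sum_{g\ \mathrm{monic},\ \deg g=j}\lambda(g)\Big)T^j .
\]
To see that the inner sums vanish for $j\ge d$, write $g=x^j+g_1x^{j-1}+\cdots+g_j$. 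The power sum $\sum_i\beta_i^{d}$ equals $(-1)^d d\,g_d$ plus a polynomial in $g_1,\ldots,g_{d-1}$. Because $\gcd(d,p)=1$, the coefficient $d$ is a unit. Hence $\sum_i h(\beta_i)$ depends affinely and non-trivially on $g_d$ once the other coefficients are fixed. Summing $\chi_1$ over $g_d\in\mathbb{F}_q$ then gives $0$ by orthogonality. This is exactly where the hypothesis $\gcd(d,q)=1$ enters.

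Writing $L(T)=\prod_{i=1}^{d-1}(1-\omega_iT)$, with some $\omega_i$ possibly zero, and comparing logarithmic derivatives yields
\[
S_1=-(\omega_1+\cdots+\omega_{d-1}).
\]
It therefore suffices to prove $|\omega_i|\le q^{1/2}$ for every $i$.

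For this I pass to the Artin--Schreier curve $X:\ y^p-y=h(x)$ over $\mathbb{F}_q$. It is absolutely irreducible, and since $p\nmid d$ it has genus $(p-1)(d-1)/2$. Counting points on $X$ over $\mathbb{F}_{q^k}$ via $\sum_{c\in\mathbb{F}_p}\zeta_p^{c\,\mathrm{Tr}(u)}$, which equals $p$ if $u$ lies in the image of $y\mapsto y^p-y$ and $0$ otherwise, shows that the numerator of the zeta function of $X$ equals $\prod_{c\in\mathbb{F}_p^*}L_{c}(T)$, where $L_c$ is built from the character $\chi_c$. The degrees match: $(p-1)(d-1)=2g$. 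Weil's Riemann hypothesis gives that every inverse root of the zeta numerator has absolute value exactly $q^{1/2}$. In particular each $\omega_i$ has $|\omega_i|=q^{1/2}$, which forces $\deg L=d-1$. Consequently $|S_1|\le(d-1)q^{1/2}$.

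The main obstacle is the Riemann hypothesis step, which is deep. I would not reprove it but cite Weil, or alternatively Bombieri--Stepanov for an elementary proof of $|S_k|\le Cq^{k/2}$. From such a bound, $|\omega_i|\le q^{1/2}$ follows by the standard trick: $S_k=-\sum_i\omega_i^k$ for all $k$, and a bound $O(q^{k/2})$ for all $k$ forces each $|\omega_i|\le q^{1/2}$, via the radius of convergence of $\sum_k S_kT^k$. Given this, the only technical check that remains is the vanishing of the coefficients for $j\ge d$. In the paper itself this lemma is classical, and I would simply refer to Lidl--Niederreiter, Theorem~5.38.
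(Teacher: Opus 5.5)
Your proposal is correct and is essentially the argument behind the paper's proof, which simply cites Lidl--Niederreiter, Theorem~5.38. That argument is the same as yours: show the $L$-function is a polynomial of degree at most $d-1$, using $p\nmid d$ through Newton's identities, then bound its inverse roots by $q^{1/2}$ via the Riemann hypothesis for the Artin--Schreier curve (or Stepanov's method). One harmless slip: for monic $g=x^j+g_1x^{j-1}+\cdots+g_j$, the $d$-th power sum of the roots is $-d\,g_d$ plus terms in $g_1,\ldots,g_{d-1}$, not $(-1)^d d\,g_d$; this does not matter, since you only use that the coefficient is a unit.
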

	
	\begin{proof}
		See \cite[Theorem 5.38]{LidlN1997}.
	\end{proof}	
	
	We also present explicit extremal character-sum examples demonstrating that
	the leading terms in these bounds are essentially tight for the types of
	polynomials arising in our framework.
	These examples will be revisited in later sections to substantiate the
	sharpness of the resulting correlation estimates.
	
	\begin{lemma}[Extremal Mixed Character Sums over $\mathbb{F}_{81}$]\label{Lemma:additive-multiplicative upper bound}
		Let $\mathbb{F}_{81}=\mathbb{F}_{3}[\alpha]/(\alpha^{4}+\alpha^{3}+\alpha^{2}+1)$ and let
		$g=1+\alpha^{2}$, which is a primitive element of $\mathbb{F}_{81}$.
		Define a nontrivial additive character $\chi$ of $\mathbb{F}_{81}$ by
		\[
		\chi(x)=\zeta_{3}^{\operatorname{Tr}(x)},
		\qquad \zeta_{3}=e^{2\pi i/3},
		\]
		where $\mathrm{Tr}(x)$ denotes the trace function from $\mathbb{F}_{81}$ to $\mathbb{F}_3$.
		Let $\psi$ be a multiplicative character of $\mathbb{F}_{81}$ of order $10$, defined by
		\[
		\psi(0)=0,\qquad 
		\psi(g^{k})=\zeta_{10}^{k},\quad 0\le k\le 79.
		\]
		Then we have
		\[
		\sum_{z\in\mathbb{F}_{81}} \psi\big((z+1)(z-1)\big)\,\chi(z)
		=18
		=2\sqrt{81}.
		\]
	\end{lemma}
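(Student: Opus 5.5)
This is a finite computation in $\mathbb{F}_{81}$, so I will simply evaluate the sum. The cleanest organization uses the primitive element $g=1+\alpha^2$ throughout. The first step is a discrete-logarithm table: compute $g^k$ for $0\le k\le 79$ as a coordinate vector in the basis $\{1,\alpha,\alpha^2,\alpha^3\}$, reducing with $\alpha^4=-\alpha^3-\alpha^2-1=2\alpha^3+2\alpha^2+2$ over $\mathbb{F}_3$. As a by-product this confirms that $g$ is primitive, i.e.\ $g^{16}\ne 1$ and $g^{40}\ne 1$, since $80=2^4\cdot 5$. It also produces the inverse map $x\mapsto \log_g x$.

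The second step computes traces. From the minimal polynomial $x^4+x^3+x^2+1$, Newton's identities give the power sums $\mathrm{Tr}(1)=4=1$, $\mathrm{Tr}(\alpha)=-1=2$, $\mathrm{Tr}(\alpha^2)$ and $\mathrm{Tr}(\alpha^3)$. Because the trace is linear, $\mathrm{Tr}(z)$ is then a fixed $\mathbb{F}_3$-linear functional of the coordinates of $z$.

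The third step evaluates the sum itself. The terms $z=\pm1$ vanish because $\psi(0)=0$. For each of the remaining $79$ values of $z$, I write $z^2-1=g^{k(z)}$ and read off $k(z)$ from the log table. The sum then becomes
\[
\sum_{z\ne\pm1}\zeta_{10}^{k(z)}\zeta_3^{\mathrm{Tr}(z)} .
\]
To organize this, I group the terms by the pair $(k(z)\bmod 10,\ \mathrm{Tr}(z))$ and count $N_{j,t}$, the number of $z$ in each class. The sum equals $\sum_{j,t} N_{j,t}\,\zeta_{10}^{j}\zeta_3^{t}$, and I would reduce this using the relations among the $30$th roots of unity, checking that everything collapses to the rational integer $18$.

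A structural reduction halves the work. Substituting $z\mapsto -z$ fixes $z^2-1$ and sends $\chi(z)$ to $\overline{\chi(z)}$. So the sum equals $\sum_{w}\psi(w-1)\bigl(\sum_{z^2=w}\chi(z)\bigr)$, which pairs $z$ with $-z$, and each pair contributes $2\psi(z^2-1)\cos\!\big(2\pi\,\mathrm{Tr}(z)/3\big)$. Since $\cos(2\pi t/3)\in\{1,-1/2\}$, the sum equals
\[
2\sum_{\mathrm{Tr}(z)=0,\ z\ne\pm1}\psi(z^2-1)\;-\;\sum_{\mathrm{Tr}(z)\ne 0}\psi(z^2-1).
\]
Adding and subtracting the trace-zero part and using $\sum_{z}\psi(z^2-1)=-\psi(1)=-1$ (the standard quadratic-polynomial evaluation, since $\psi$ is nontrivial and $z^2-1$ has distinct roots) gives
\[
3\sum_{\mathrm{Tr}(z)=0}\psi(z^2-1)+1 .
\]
The claim therefore reduces to the identity
\[
\sum_{\mathrm{Tr}(z)=0}\psi(z^2-1)=\tfrac{17}{3},
\]
where the sum runs over the $27$ elements of the trace-zero hyperplane, $25$ of which contribute nonzero terms. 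This is a much smaller computation.

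The main obstacle is purely bookkeeping: getting the $80$-entry log table and the trace functional correct by hand. In practice I would carry out this step with a short computer verification, for example in Sage or PARI, and present the resulting counts in a table. The non-integrality of $17/3$ is suspicious, which means I must double-check the reduction before trusting it, and fall back to the direct grouped computation if it fails. The final value $18=2\sqrt{81}$ then shows that the constant $\deg(h)+m-1=1+2-1=2$ in Lemma~\ref{Lemma:Estimate-additive-multiplicative-character} is attained.
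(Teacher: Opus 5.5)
Your brute-force plan (a discrete-log table for $g$, the linear trace functional, and a machine summation of the $79$ nonzero terms) is exactly the paper's proof. That proof consists only of the Weil bound $|S|\le 18$ plus an asserted direct evaluation. The ``structural reduction'' you propose to halve the work is, however, wrong, and the value $17/3$ is not merely suspicious. The sum $\sum_{\mathrm{Tr}(z)=0}\psi(z^2-1)$ lies in $\mathbb{Z}[\zeta_{10}]$, and a rational algebraic integer is a rational integer, so the reduction itself must contain an error. In fact it contains two.

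First, $z\mapsto -z$ gives $S=\sum_z\psi(z^2-1)\cos(2\pi\mathrm{Tr}(z)/3)=A-\tfrac12B$, where $A$ and $B$ are the sums of $\psi(z^2-1)$ over $\mathrm{Tr}(z)=0$ and over $\mathrm{Tr}(z)\neq0$. Your $2A-B$ sums the pair contribution $2\psi(z^2-1)\cos(\cdot)$ over all $z$ instead of over pairs.

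Second, $\sum_z\psi(z^2-1)=-1$ is the evaluation for the quadratic character $\eta$ only. For $\psi$ of order $10$,
\[
\sum_{z\in\mathbb{F}_{81}}\psi(z^2-1)=\sum_{w\in\mathbb{F}_{81}}\psi(w-1)\bigl(1+\eta(w)\bigr)=\psi(-1)\,J(\eta,\psi),
\]
which is a Jacobi sum of modulus $9$. Here all the Gauss sums involved are pure (semiprimitive case, $3^2\equiv-1\pmod{10}$): $G(\psi)=G(\eta)=-9$ and $G(\eta\psi)=G(\psi^6)=9$, so $A+B=9$.

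A smaller slip: $\mathrm{Tr}(\pm1)=\pm1\neq0$, so $\pm1$ are not in the trace-zero hyperplane, and all $27$ terms of $A$ are nonzero.

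Corrected, your reduction reads $S=\tfrac12(3A-9)$, so the lemma is equivalent to $A=15$. This is consistent and does halve the search. You can also avoid the computer altogether. Write $\psi(x)=G(\bar\psi)^{-1}\sum_{y}\bar\psi(y)\chi(xy)$ and complete the square (note $4=1$ in characteristic $3$). This gives
\[
S=\frac{G(\eta)}{G(\bar\psi)}\sum_{y\in\mathbb{F}_{81}^*}(\bar\psi\eta)(y)\,\chi\bigl(y+y^{-1}\bigr),\qquad \frac{G(\eta)}{G(\bar\psi)}=\frac{-9}{-9}=1 .
\]
The order-$5$ character $\bar\psi\eta$ is trivial on $\mathbb{F}_9^*=\langle g^{10}\rangle$. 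So split $y$ over the cosets $g^j\mathbb{F}_9^*$, $0\le j\le 9$, and use $\mathrm{Tr}_{81/3}=\mathrm{Tr}_{9/3}\circ\mathrm{Tr}_{81/9}$.

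For $j\neq5$, the coset sum is the Kloosterman sum $\mathrm{Kl}(a_j)=\sum_{c\in\mathbb{F}_9^*}\zeta_3^{\mathrm{Tr}_{9/3}(c+a_j/c)}$ with $a_j=2+g^{8j}+g^{-8j}\in\mathbb{F}_9^*$. For $j=5$ it equals $8$, because $g^{40}=-1$ forces $\mathrm{Tr}_{81/9}$ to vanish on $g^{\pm5}\mathbb{F}_9^*$.

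In $\mathbb{F}_9=\mathbb{F}_3(i)$ one finds $\mathrm{Kl}(1)=5$, $\mathrm{Kl}(i)=-1$ and $\mathrm{Kl}(1+2i)=-4$. The cosets $j$ and $10-j$ share the same $a_j$ and carry total weight $2\cos(4\pi j/5)\in\{-\phi,\phi^{-1}\}$, with $\phi=(1+\sqrt5)/2$. Using $\mathrm{Kl}(a^3)=\mathrm{Kl}(a)$, the eight cosets $j\neq0,5$ therefore contribute $(\phi^{-1}-\phi)\bigl(\mathrm{Kl}(i)+\mathrm{Kl}(1+2i)\bigr)=5$. The coset $j=0$ contributes $\mathrm{Kl}(1)=5$, so $S=8+5+5=18$. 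This is a checkable proof of the value that the paper only asserts.
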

	
	\begin{proof}
		Since the polynomial $(z+1)(z-1)=z^{2}-1$ has two distinct zeros in
		$\mathbb{F}_{81}$ and $\psi$ is nontrivial, the mixed character sum
		\[
		\sum_{z\in\mathbb{F}_{81}} \psi(z^{2}-1)\chi(z)
		\]
		is a Kummer--Artin--Schreier type exponential sum of degree $2$.
		By the Weil bound for mixed character sums, its magnitude is bounded by
		$2\sqrt{81}=18$.
		
		A direct evaluation over $\mathbb{F}_{81}$ with the above explicit choices
		of the additive character $\chi$ and the multiplicative character $\psi$
		shows that the sum attains this upper bound, and equals $18$.
		This completes the proof.
	\end{proof}
	
	\begin{lemma}[Extremal Multiplicative Character Sums over $\mathbb{F}_{25}$]
		\label{Lemma:additive-multiplicative upper bound optimal}
		Let $\mathbb{F}_{25}=\mathbb{F}_{5}[\alpha]/(\alpha^{2}-2)$.
		Set $g=1+2\alpha\in\mathbb{F}_{25}^{\ast}$, which is a primitive element of
		$\mathbb{F}_{25}^{\ast}$ (hence $|\mathbb{F}_{25}^{\ast}|=24$).
		Define a multiplicative character $\psi$ of order $8$ by
		\[
		\psi(0)=0,\qquad
		\psi(g^{t})=\zeta_{8}^{\,t},\quad 0\le t\le 23,
		\]
		where $\zeta_{8}=e^{2\pi i/8}$.
		Then
		\[
		\sum_{z\in\mathbb{F}_{25}}\psi\big((z+1)(z-1)\big)=5=\sqrt{25}.
		\]
	\end{lemma}
	
	\begin{proof}
		Since $\operatorname{char}(\mathbb{F}_{25})\neq 2$, we have
		$(z+1)(z-1)=z^{2}-1$ with two distinct zeros in $\mathbb{F}_{25}$.
		Moreover, $\psi$ is non-trivial and $\psi^{2}$ is also non-trivial (because
		$\operatorname{ord}(\psi)=8>2$). Hence the character sum
		\[
		S=\sum_{z\in\mathbb{F}_{25}}\psi(z^{2}-1)
		\]
		admits the Jacobi-sum representation
		\[
		S=\psi(4)\,J(\psi,\psi^{-2}),
		\]
		and therefore satisfies $|S|=\sqrt{25}=5$.
		A direct evaluation in $\mathbb{F}_{25}$ with the above explicit choice of
		$\psi$ yields $S=5$, completing the proof.
	\end{proof}
	
	\begin{lemma}[An extremal negative cubic additive sum over $\mathbb F_{625}$]\label{Lemma:cubic special upper bound} Let $\mathbb{F}_{625}$ be a finite field and let the additive character $\chi$ be defined by
		$$
		\chi(x)=\zeta_{5}^{\hbox{Tr}(x)},
		$$
		where $\mathrm{Tr}$ denotes the trace function from $\mathbb{F}_{625}$ to $\mathbb{F}_5$.
		Then we have
		\begin{eqnarray*}
			\sum_{z\in \mathbb{F}_{625}}\chi(z^{3})=-50=-2\sqrt{625}.
		\end{eqnarray*}	
	\end{lemma}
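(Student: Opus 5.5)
The plan is to write the sum through the cubic multiplicative characters and then evaluate the resulting cubic Gauss sums using the semiprimitive (pure Gauss sum) case. The arithmetic facts needed are $q=625=5^4$ and $q-1=624=3\cdot 208$, so $3\mid q-1$, the cubing map on $\mathbb{F}_{625}^{*}$ is $3$-to-$1$, and $\mathbb{F}_{625}$ has exactly two nontrivial multiplicative characters $\psi,\bar\psi=\psi^2$ of order $3$.

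\textbf{Step 1 (reduction to Gauss sums).} For $x\in\mathbb{F}_{625}$, the number of $z$ with $z^3=x$ equals $\sum_{j=0}^{2}\psi^j(x)$, with the convention $\psi^0(0)=1$ and $\psi^{j}(0)=0$ for $j\ne 0$. Since $\sum_{x}\chi(x)=0$, this gives
\[
\sum_{z\in\mathbb{F}_{625}}\chi(z^3)=\sum_{x\in\mathbb{F}_{625}}\chi(x)\bigl(1+\psi(x)+\bar\psi(x)\bigr)=G(\psi,\chi)+G(\bar\psi,\chi),
\]
where $G(\psi,\chi)=\sum_{x\in\mathbb{F}_{625}^*}\psi(x)\chi(x)$ is the Gauss sum.

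\textbf{Step 2 (evaluation of the Gauss sums).} This is the main step. Since $5\equiv -1\pmod 3$, we are in the semiprimitive case with $d=3$, $p=5$, $j=1$, and $n=4=2js$ with $s=2$. The theorem on pure Gauss sums (e.g.\ \cite[Theorem 5.16]{LidlN1997}, or Berndt--Evans--Williams) gives, for $p$ odd,
\[
G(\psi,\chi)=(-1)^{s-1+(p^j+1)s/d}\sqrt{q}.
\]
The exponent is $1+6\cdot 2/3=5$, which is odd, so $G(\psi,\chi)=-25$. The same computation applies to $\bar\psi$, which also has order $3$, so $G(\bar\psi,\chi)=-25$. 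Hence the sum equals $-50=-2\sqrt{625}$.

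\textbf{Step 3 (cross-check via Hasse--Davenport).} The main obstacle is the sign in Step 2, so I would confirm it independently. Over $\mathbb{F}_{25}$ we have $3\mid 24$ and the semiprimitive case with $s=1$. The exponent is $0+6/3=2$, which is even, so the cubic Gauss sum over $\mathbb{F}_{25}$ equals $+5$. Every cubic character of $\mathbb{F}_{625}$ is the lift $\psi_{25}\circ N_{\mathbb{F}_{625}/\mathbb{F}_{25}}$ of a cubic character of $\mathbb{F}_{25}$. The trace-based $\chi$ on $\mathbb{F}_{625}$ is the lift of the trace-based character of $\mathbb{F}_{25}$, by transitivity of the trace. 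The Hasse--Davenport relation then gives
\[
-G_{625}=(-G_{25})^2=25,
\]
so $G_{625}=-25$, in agreement with Step 2. As a last resort, one could also evaluate the sum directly by computer over an explicit model of $\mathbb{F}_{625}$, in the same way as Lemmas~\ref{Lemma:additive-multiplicative upper bound} and~\ref{Lemma:additive-multiplicative upper bound optimal}.
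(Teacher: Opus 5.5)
Your proposal is correct and matches the paper's proof: the paper makes the same decomposition $\sum_{z}\chi(z^3)=G(\psi,\chi)+G(\psi^2,\chi)$ for a cubic character $\psi$, then evaluates both Gauss sums as $-25$ by Stickelberger's theorem in the semiprimitive case, citing Yip's Theorem 3.12 where you cite Lidl--Niederreiter's Theorem 5.16 with exponent $1+6\cdot 2/3=5$. Your Hasse--Davenport check, which lifts the cubic Gauss sum $+5$ over $\mathbb{F}_{25}$ to $-25$ over $\mathbb{F}_{625}$, correctly confirms the sign and goes beyond what the paper includes.
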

	
	\begin{proof} Let $\psi$ be a multiplicative character of $\mathbb{F}_{625}$ with $\textup{ord}(\psi)=3$ and let
		$G(\psi, \chi)=\sum\limits_{y\in \mathbb{F}_{625}^{*}}\psi(y)\chi(y)$. We have
		\begin{eqnarray*}
			\sum_{z\in \mathbb{F}_{625}}\chi(z^{3})&=&\sum_{z\in \mathbb{F}_{625}^{*}}\chi(z^{3})+1
			=\sum_{y\in \mathbb{F}_{625}^{*}}\left(1+\psi(y)+\psi^2(y)\right)\chi(y)+1 \\
			&=&G(\psi, \chi)+G(\psi^2, \chi).
		\end{eqnarray*}	
		From Stickelberger’s theorem (see Theorem 3.12 of \cite{Yip2022}), we know that
		$G(\psi, \chi)=G(\psi^2, \chi)=-25$.
		Hence, 
		\begin{eqnarray*}
			\sum_{z\in \mathbb{F}_{625}}\chi(z^{3})=-50.
		\end{eqnarray*}	
	\end{proof}

	\begin{lemma}[An explicit example over $\mathbb{F}_{81}$]\label{Lemma:cubic special upper bound-ternary}
		Let $\mathbb{F}_{81}=\mathbb{F}_3(\alpha)\cong \mathbb{F}_3[x]/(f(x))$, $f(x)=x^4+x+2$,
		where $f(x)$ is irreducible over $\mathbb{F}_3$, and let $\alpha$ denote the residue class of $x$.
		Let $\hbox{Tr}(x)$ denote the trace function from $\mathbb{F}_{81}$ to $\mathbb{F}_3$.
		Set
		\[
		c=\alpha^2\in\mathbb{F}_{81}^{*}.
		\]
		(With the above choice of $f$, one checks that $\hbox{Tr}(c)=0$ and $c$ is a square in $\mathbb{F}_{81}^{*}$.)
		
		Define a nontrivial additive character $\chi$ by $\chi(x)=\zeta_{3}^{\hbox{Tr}(cx)}$.
		Then
		\[
		\sum_{z\in\mathbb{F}_{81}} \chi(z^2+2z)=-9=-\sqrt{81}.
		\]
	\end{lemma}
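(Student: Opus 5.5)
The plan is to reduce the sum to a quadratic Gauss sum by completing the square, and then read off its value from the classical explicit evaluation (Theorem 5.15 of \cite{LidlN1997}).

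\textbf{Completing the square.} In characteristic $3$ we have $2=-1$, so
\[
z^2+2z=(z+1)^2-1 .
\]
The substitution $w=z+1$ is a bijection of $\mathbb{F}_{81}$, and $\chi$ is additive. Hence
\[
\sum_{z\in\mathbb{F}_{81}}\chi(z^2+2z)=\chi(-1)\sum_{w\in\mathbb{F}_{81}}\chi(w^2)=\zeta_3^{-\mathrm{Tr}(c)}\sum_{w\in\mathbb{F}_{81}}\zeta_3^{\mathrm{Tr}(cw^2)} .
\]

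\textbf{Checking $\mathrm{Tr}(c)=0$.} The trace of $c=\alpha^2$ is the second power sum $p_2$ of the conjugates of $\alpha$. These conjugates are the roots of $f(x)=x^4+x+2$, whose $x^3$ and $x^2$ coefficients both vanish, so $e_1=e_2=0$. Newton's identity gives $p_2=e_1^2-2e_2=0$. Therefore the prefactor $\zeta_3^{-\mathrm{Tr}(c)}$ equals $1$.

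\textbf{Reducing to a Gauss sum.} Let $\eta$ be the quadratic character of $\mathbb{F}_{81}$ and $\chi_1(x)=\zeta_3^{\mathrm{Tr}(x)}$ the canonical additive character. Every nonzero square $y$ has exactly two square roots, so
\[
\sum_{w}\chi_1(cw^2)=1+\sum_{y\in\mathbb{F}_{81}^*}\bigl(1+\eta(y)\bigr)\chi_1(cy)=\sum_{y\in\mathbb{F}_{81}^*}\eta(y)\chi_1(cy),
\]
using $\sum_{y\ne 0}\chi_1(cy)=-1$. The substitution $u=cy$ turns the right-hand side into $\eta(c)^{-1}G(\eta,\chi_1)=\eta(c)\,G(\eta,\chi_1)$. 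Since $c=\alpha^2$ is a square, $\eta(c)=1$. So the original sum equals $G(\eta,\chi_1)$.

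\textbf{Evaluating the Gauss sum.} Theorem 5.15 of \cite{LidlN1997} evaluates the quadratic Gauss sum over $\mathbb{F}_{p^n}$ for odd $p$ as
\[
G(\eta,\chi_1)=(-1)^{n-1}\,\sqrt{-1}^{\,n(p-1)^2/4}\,q^{1/2}.
\]
Here $p=3$, $n=4$, so $n(p-1)^2/4=4$. This gives
\[
G(\eta,\chi_1)=(-1)^3\cdot\sqrt{-1}^{\,4}\cdot 9=-9=-\sqrt{81}.
\]

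\textbf{Main obstacle.} The only delicate step is getting the sign of the Gauss sum right; its modulus $\sqrt{q}$ is standard. I will take the sign from the explicit Lidl--Niederreiter formula, equivalently from the Davenport--Hasse lifting theorem applied to the prime-field value $G_{\mathbb{F}_3}=\sqrt{-3}$, raised to the fourth power with the sign $(-1)^{n-1}$. As a fallback, the value can be confirmed by direct computation over $\mathbb{F}_{81}$ with the stated modulus $f$.
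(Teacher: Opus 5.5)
Your proposal is correct and follows the paper's proof. Both complete the square, use $\mathrm{Tr}(c)=0$ to get $\chi(-1)=1$, use $\eta(c)=1$ to reduce to the canonical quadratic Gauss sum, and take its sign from the classical evaluation; the paper's $(-1)^{m+1}3^m$ for $q=3^{2m}$ is the case $s=4$ of Lidl--Niederreiter's Theorem 5.15. You also fill in what the paper leaves as ``one checks'': the Newton-identity verification $\mathrm{Tr}(\alpha^2)=e_1^2-2e_2=0$ and the explicit reduction of $\sum_w\chi_1(cw^2)$ to $\eta(c)\,G(\eta,\chi_1)$.
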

	
	\begin{proof}
		Note that $z^2+2z=(z+1)^2-1$, thus we have
		\[
		\sum_{z\in \mathbb{F}_{81}} \chi(z^2+2z)
		=\chi(-1)\sum_{z\in \mathbb{F}_{81}}\chi(z^2).
		\]
		Since $\chi(-1)=\zeta_{3}^{\hbox{Tr}(-c)}=1$ (because $\hbox{Tr}(c)=0$), we have
		\[
		\sum_{z\in \mathbb{F}_{81}} \chi(z^2+2z)=\sum_{z\in\mathbb{F}_{81}}\chi(z^2).
		\]
		
		Let $\psi(x)=\zeta_3^{\hbox{Tr}(x)}$ be the canonical additive character on $\mathbb{F}_{81}$, and let $\eta$ be the quadratic character on $\mathbb{F}_{81}^{*}$ (extended by $\eta(0)=0$).
		Since $c$ is a square, $\eta(c)=1$, and thus the quadratic exponential sum satisfies
		\[
		\sum_{z\in\mathbb{F}_{81}}\chi(z^2)
		=\sum_{z\in\mathbb{F}_{81}}\psi(cz^2)
		=\eta(c)\sum_{z\in\mathbb{F}_{81}}\psi(z^2)
		=\sum_{z\in\mathbb{F}_{81}}\psi(z^2).
		\]
		For $q=3^{2m}$ with $m=2$, the classical evaluation of quadratic Gauss sums yields
		\[
		\sum_{z\in\mathbb{F}_{q}}\psi(z^2)=(-1)^{m+1}\,3^{m}=(-1)^3\cdot 9=-9.
		\]
		Therefore $\sum\limits_{z\in\mathbb{F}_{81}} \chi(z^2+2z)=-9$, as claimed.
	\end{proof}
	
	\subsection{Main Construction}
	
	Let $p^{n}$ be a prime power and let $g$ be a primitive element of $\mathbb{F}_{p^{2n}}$. Let \(\chi\) be a non-trivial additive character of \(\mathbb{F}_{p^{2n}}\) and
	let $Q\in\mathbb{Z}$ with $Q\mid p^{2n}-1$ and $1< Q< p^{2n}-1$. Denote
	$$
	\mathcal{H}_{Q}=\left\{g^{\frac{p^{2n}-1}{Q}j}: \ j=0,1,\cdots, Q-1\right\}.
	$$
	For $\alpha, \beta\in \mathbb{F}_{p^{2n}}$ and $\eta\in \mathcal{H}_{Q}$ we define
	$$
	s_{k,t}^{\alpha, \beta, \eta}=\chi\left(\alpha g^{3(k\frac{p^{2n}-1}{Q}+t)}+\beta g^{2(k\frac{p^{2n}-1}{Q}+t)}+
	\eta g^{k\frac{p^{2n}-1}{Q}+t}\right), \quad  0\leq k \leq Q-1, \ 0\leq t\leq \frac{p^{2n}-1}{Q}-1,
	$$
	and
	$$
	\mathbf{S}^{\alpha, \beta, \eta} = 
	\begin{bmatrix}
		\mathbf{s}_{0}^{\alpha, \beta, \eta} \\
		\mathbf{s}_{1}^{\alpha, \beta, \eta} \\
		\vdots \\
		\mathbf{s}_{Q-1}^{\alpha, \beta, \eta}
	\end{bmatrix}
	= 
	\begin{bmatrix}
		s_{0,0}^{\alpha, \beta, \eta} & s_{0,1}^{\alpha, \beta, \eta} & \cdots & s_{0, \frac{p^{2n}-1}{Q}-1}^{\alpha, \beta, \eta} \\
		s_{1,0}^{\alpha, \beta, \eta} & s_{1,1}^{\alpha, \beta, \eta} & \cdots & s_{1, \frac{p^{2n}-1}{Q}-1}^{\alpha, \beta, \eta} \\
		\vdots & \vdots & \ddots & \vdots \\
		s_{Q-1, 0}^{\alpha, \beta, \eta} & s_{Q-1, 1}^{\alpha, \beta, \eta} & \cdots & s_{Q-1, \frac{p^{2n}-1}{Q}-1}^{\alpha, \beta, \eta}
	\end{bmatrix}.
	$$
	For $0\leq \tau\leq \frac{p^{2n}-1}{Q}-1$ we have
	\begin{eqnarray}\label{Equation:additive finite fields-5-1}
		&&R_{\mathbf{S}^{\alpha_1, \beta_1, \eta_1}, \mathbf{S}^{\alpha_2, \beta_2, \eta_2}}(\tau)
		=\sum_{k=0}^{Q-1}\sum_{t=0}^{\frac{p^{2n}-1}{Q}-1}s^{\alpha_1, \beta_1, \eta_1}_{k,t}(s_{k,t+\tau}^{\alpha_2, \beta_2, \eta_2})^{*} \nonumber \\
		&&=\sum_{k=0}^{Q-1}\sum_{t=0}^{\frac{p^{2n}-1}{Q}-1}\chi\left(\alpha_{1}g^{3(k\frac{p^{2n}-1}{Q}+t)}
		+\beta_{1}g^{2(k\frac{p^{2n}-1}{Q}+t)}+
		\eta_{1}g^{k\frac{p^{2n}-1}{Q}+t}\right)  \nonumber\\
		&&\qquad \times 
		(\chi\left(\alpha_{2}g^{3(k\frac{p^{2n}-1}{Q}+t+\tau)}+\beta_{2}g^{2(k\frac{p^{2n}-1}{Q}+t+\tau)}+
		\eta_{2}g^{k\frac{p^{2n}-1}{Q}+t+\tau}\right))^{*} \nonumber \\
		&&=\sum_{y=0}^{p^{2n}-2}\chi\left(\alpha_{1}g^{3y}+\beta_{1}g^{2y}+
		\eta_{1}g^{y}-\alpha_{2}g^{3(y+\tau)}-\beta_{2}g^{2(y+\tau)}-
		\eta_{2}g^{y+\tau}\right)  \nonumber \\
		&&=\sum_{z\in \mathbb{F}_{p^{2n}}^{*}}\chi\Big((\alpha_{1}-\alpha_{2}g^{3\tau})z^{3}
		+(\beta_{1}-\beta_{2}g^{2\tau})z^{2}+(\eta_{1}-\eta_{2}g^{\tau})z\Big)  \nonumber \\
		&&=\sum_{z\in \mathbb{F}_{p^{2n}}}\chi\Big((\alpha_{1}-\alpha_{2}g^{3\tau})z^{3}
		+(\beta_{1}-\beta_{2}g^{2\tau})z^{2}+(\eta_{1}-\eta_{2}g^{\tau})z\Big)-1.
	\end{eqnarray}	
	
	\begin{construction}\label{Construction:additive finite fields-5} 
		Define the set
		$$
		\mathcal{T}=\left\{\mathbf{S}^{\alpha, \beta, \eta}: \   \alpha\in \mathbb{F}_{p^{2n}}, \  \beta\in \mathbb{F}_{p^{2n}}, \ \eta\in \mathcal{H}_{Q}\right\}.
		$$
	\end{construction}
	
	\begin{construction}\label{Construction:additive finite fields-optimal} 	Define the set
		$$
		\mathcal{T}_1=\left\{\mathbf{S}^{0, \beta, \eta}: \   \beta\in \mathbb{F}_{p^{2n}}, \   \eta\in \mathcal{H}_{Q}\right\}.
		$$
	\end{construction}
	
	\subsection{Correlation Analysis}
	
	\begin{theorem}\label{Theorem: Construction by additive finite fields-5}  
		Let $p$ be the characteristic of the finite field $\mathbb{F}_{q}$ with $p\geq 5$.
		Construction \ref{Construction:additive finite fields-5} yields an asymptotically near-optimal $(p^{4n}Q, \ Q, \ \frac{p^{2n}-1}{Q}, \ \delta_{\text{max}})$-QCSS with $\delta_{\text{max}}\leq 2p^{n}+1$.
	\end{theorem}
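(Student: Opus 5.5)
The plan is to start from the correlation identity \eqref{Equation:additive finite fields-5-1}. It reduces every periodic correlation of $\mathcal{T}$ to a complete additive character sum over $\mathbb{F}_{p^{2n}}$ of the cubic polynomial
\[
h(z)=Az^{3}+Bz^{2}+Cz,\qquad A=\alpha_1-\alpha_2g^{3\tau},\ B=\beta_1-\beta_2g^{2\tau},\ C=\eta_1-\eta_2g^{\tau},
\]
minus $1$. The parameters are read off directly. The number of matrices is $|\mathbb{F}_{p^{2n}}|^2|\mathcal{H}_Q|=p^{4n}Q$, the flock size is $K=Q$, and the length is $N=(p^{2n}-1)/Q$. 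We also need the matrices $\mathbf{S}^{\alpha,\beta,\eta}$ to be pairwise distinct, which will follow from the correlation analysis: distinct triples with $\tau=0$ give a correlation of modulus strictly less than $KN$.

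We then split according to the degree of $h$.

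\textbf{Case $A\neq 0$.} Here $\deg h=3$ and $\gcd(3,p^{2n})=1$ since $p\ge 5$. Lemma~\ref{Lemma:Estimate-additive-character} gives $|\sum\chi(h(z))|\le 2p^{n}$, so $|R|\le 2p^n+1$.

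\textbf{Case $A=0$, $B\neq 0$.} Here $\deg h=2$ with $\gcd(2,p)=1$, so the same lemma gives $|R|\le p^n+1$.

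\textbf{Case $A=B=0$, $C\neq 0$.} The sum of a nontrivial linear character over the field vanishes, so $|R|=1$.

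\textbf{Case $A=B=C=0$.} We show this happens only for the trivial in-phase autocorrelation. The condition $C=0$ gives $\eta_1=\eta_2g^{\tau}$. Both $\eta_1,\eta_2\in\mathcal{H}_Q$, so $g^{\tau}\in\mathcal{H}_Q$, which forces $\frac{p^{2n}-1}{Q}\mid\tau$. Since $0\le\tau<\frac{p^{2n}-1}{Q}$, this means $\tau=0$. Then $\eta_1=\eta_2$, $\alpha_1=\alpha_2$ and $\beta_1=\beta_2$, i.e.\ the same matrix at zero shift, which is excluded from $\delta_a$ and $\delta_c$. In particular this argument also gives distinctness of the matrices. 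Combining the four cases yields $\delta_{\max}\le 2p^n+1$.

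For asymptotic near-optimality, $\delta_{\mathrm{opt}}=KN\sqrt{(M/K-1)/(MN-1)}$ with $KN=p^{2n}-1$, $M/K=p^{4n}$ and $MN=p^{4n}(p^{2n}-1)$. This gives
\[
\delta_{\mathrm{opt}}=(p^{2n}-1)\sqrt{\frac{p^{4n}-1}{p^{4n}(p^{2n}-1)-1}}\sim p^{n},
\]
so $\rho=\delta_{\max}/\delta_{\mathrm{opt}}\le (2p^n+1)/\delta_{\mathrm{opt}}\to 2$ as $n\to\infty$ (or $p\to\infty$). This falls in the near-optimal range $1<\rho\le 2$.

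The only delicate step is the degenerate case, i.e.\ ruling out that $h\equiv 0$ occurs for a nontrivial pair. This is where restricting $\eta$ to the coset structure $\mathcal{H}_Q$, rather than all of $\mathbb{F}_{p^{2n}}$, is essential: the divisibility argument on $\tau$ must be written carefully. A secondary point is that the Weil bound requires $p\nmid 3$ and $p\nmid 2$, which explains the hypothesis $p\ge5$.
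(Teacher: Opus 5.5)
Apart from one point, your argument is the paper's proof written out more carefully. The paper likewise shows that $\eta_1-\eta_2g^{\tau}=0$ forces $\tau=0$ and $\eta_1=\eta_2$, concludes that the phase polynomial is non-constant for every nontrivial pair, and applies \textit{Lemma \ref{Lemma:Estimate-additive-character}} once. Your degree split checks $\gcd(\deg h,p)=1$ case by case, and you also supply distinctness of the matrices and $\delta_{\mathrm{opt}}\sim p^{n}$, which the paper leaves implicit.

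The remaining point is a genuine gap, and the paper has it too: identity \eqref{Equation:additive finite fields-5-1} is false for $\tau\neq 0$. Row $k$ is $(F(kN),\dots,F(kN+N-1))$ with $N=\frac{p^{2n}-1}{Q}$ and $F(y)=\chi(\alpha g^{3y}+\beta g^{2y}+\eta g^{y})$. The periodic correlation pairs $s_{k,t}$ with $s_{k,(t+\tau)\bmod N}$. For the wrapped positions $t\ge N-\tau$ this entry is $F(kN+t+\tau-N)$, not $F(kN+t+\tau)$, and the two differ in general because $g^{N}$ has order $Q>1$. Write $P_\sigma(z)=(\alpha_1-\alpha_2g^{3\sigma})z^3+(\beta_1-\beta_2g^{2\sigma})z^2+(\eta_1-\eta_2g^{\sigma})z$, and let $B=\bigcup_{s=1}^{\tau}g^{-s}\mathcal{H}_Q$ be the set of wrapped positions. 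The correct expression is then
\[
R_{\mathbf{S}^{\alpha_1,\beta_1,\eta_1},\mathbf{S}^{\alpha_2,\beta_2,\eta_2}}(\tau)=\sum_{z\in\mathbb{F}_{p^{2n}}^{*}}\chi\big(P_{\tau}(z)\big)+\sum_{z\in B}\Big(\chi\big(P_{\tau-N}(z)\big)-\chi\big(P_{\tau}(z)\big)\Big).
\]
The second sum is an incomplete sum over $Q\tau$ points, on which \textit{Lemma \ref{Lemma:Estimate-additive-character}} gives no information. Concretely, take $\mathbb{F}_9=\mathbb{F}_3(i)$ with $i^2=-1$, $g=1+i$, $\chi(x)=\zeta_3^{\mathrm{Tr}(x)}$, $Q=2$, $N=4$. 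The rows of $\mathbf{S}^{0,0,1}$ have $\zeta_3$-exponents $(2,2,0,2)$ and $(1,1,0,1)$, so its autocorrelation at $\tau=1$ is $2$, whereas \eqref{Equation:additive finite fields-5-1} predicts $-1$. This example has $p=3$, but the identity is asserted for every prime power.

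So your argument establishes only the $\tau=0$ part: the in-phase correlations are at most $2p^{n}+1$, and the $p^{4n}Q$ matrices are distinct. It does not establish $\delta_{\max}\le 2p^{n}+1$. The delicate step is the reduction to a complete sum, not the degenerate case.

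One repair is to index the rows so that a cyclic shift becomes a multiplication. Evaluate the phase at $\gamma_k\theta^{t}$ instead of $g^{kN+t}$, where $\theta=g^{Q}$ has order $N$ and $\gamma_0,\dots,\gamma_{Q-1}$ represent the cosets of $\langle\theta\rangle$. Your computation then goes through with $g^{\tau}$ replaced by $\theta^{\tau}$. However, the degenerate case now needs $\theta^{\tau}\in\mathcal{H}_Q$ to force $\tau=0$, which holds if and only if $\gcd(Q,N)=1$. If $\gcd(Q,N)>1$, some matrix is a row-wise cyclic shift of another, and the correlation reaches $KN$.
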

	
	\begin{proof}	Let $\alpha_1, \alpha_2, \beta_1, \beta_2\in \mathbb{F}_{p^{2n}}$,  $\eta_1, \eta_2\in \mathcal{H}_{Q}$ 
		and $0\leq \tau\leq \frac{p^{2n}-1}{Q}-1$ such that $\alpha_1=\alpha_2$, $\beta_1=\beta_2$, $\eta_1=\eta_2$ and $\tau=0$ do not hold simultaneously. If $\eta_{1}-\eta_{2}g^{\tau}=0$, we get $g^{\tau}=\eta_{1}
		\eta_{2}^{-1}$. That is, $g^{\tau Q}=1$ since $\eta_1, \eta_2\in \mathcal{H}_{Q}$. So we have $p^{2n}-1\mid \tau Q$, which yields
		$$
		\frac{p^{2n}-1}{Q} \mid \tau. 
		$$
		Then $\tau=0$ since $0\leq\tau\leq \frac{p^{2n}-1}{Q}-1$. Hence,
		\begin{eqnarray}\label{Equation:additive finite fields-5-2}
			\eta_{1}-\eta_{2}g^{\tau}=0\Longleftrightarrow \tau=0 \hbox{ \ and \ } \eta_{1}=\eta_{2}.
		\end{eqnarray}
		From (\ref{Equation:additive finite fields-5-2}), it follows that
		\begin{eqnarray}\label{Equation:additive finite fields-5-3}
			\left\{\begin{array}{l}
				\alpha_{1}-\alpha_{2}g^{3\tau} = 0,	 \\			
				\beta_{1}-\beta_{2}g^{2\tau} = 0, \\
				\eta_{1}-\eta_{2}g^{\tau}=0,  
			\end{array}	
			\right.	 \Longleftrightarrow \ \tau=0, \ \eta_1=\eta_2, \ \beta_1=\beta_2, \ \alpha_1=\alpha_2.
		\end{eqnarray}
		
		By assumption, these four conditions do not hold simultaneously.
		Therefore, at least one of the coefficients
		\[
		\alpha_1-\alpha_2 g^{3\tau},\quad
		\beta_1-\beta_2 g^{2\tau},\quad
		\eta_1-\eta_2 g^\tau
		\]
		is nonzero, and hence the polynomial
		\[
		f(z)
		=
		(\alpha_1-\alpha_2 g^{3\tau})z^3
		+(\beta_1-\beta_2 g^{2\tau})z^2
		+(\eta_1-\eta_2 g^\tau)z
		\]
		is non-constant. 
		By \textit{Lemma \ref{Lemma:Estimate-additive-character}} we have
		\begin{eqnarray}
			\left|\sum_{z\in \mathbb{F}_{p^{2n}}}\chi\Big((\alpha_{1}-\alpha_{2}g^{3\tau})z^{3}
			+(\beta_{1}-\beta_{2}g^{2\tau})z^{2}+(\eta_{1}-\eta_{2}g^{\tau})z\Big)\right|\leq 2p^{n}.
		\end{eqnarray}
		Then from (\ref{Equation:additive finite fields-5-1}) we have $\delta_{\max}\leq 2p^{n}+1$, which establishes the claimed asymptotic near-optimality of \textit{Construction \ref{Construction:additive finite fields-5}}. 
	\end{proof}
	
	\begin{theorem}\label{Theorem: Construction by additive finite fields-optimal} 	
		Let $p$ be the characteristic of the finite field $\mathbb{F}_{q}$ with $p\geq 3$.
		Construction \ref{Construction:additive finite fields-optimal} produces an asymptotically optimal
		$(p^{2n}Q, \ Q, \ \frac{p^{2n}-1}{Q}, \ \delta_{\text{max}})$-QCSS with $\delta_{\text{max}}\leq 
		p^{n}+1$.
	\end{theorem}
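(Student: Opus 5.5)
The plan follows the proof of \textit{Theorem \ref{Theorem: Construction by additive finite fields-5}}, specialized to $\alpha_1=\alpha_2=0$. The work splits into three parts: checking the parameters, bounding the correlations via the quadratic character sum, and comparing with $\delta_{\mathrm{opt}}$.

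\textbf{Parameters.} Each matrix $\mathbf{S}^{0,\beta,\eta}$ has $K=Q$ rows and length $N=\frac{p^{2n}-1}{Q}$. To get $M=p^{2n}Q$ we need the map $(\beta,\eta)\mapsto\mathbf{S}^{0,\beta,\eta}$ to be injective. This will come out of the correlation step: distinct pairs give $|R(0)|\le p^n+1<KN$, whereas equal matrices would give $R(0)=KN$.

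\textbf{Correlation bound.} Fix $\beta_1,\beta_2\in\mathbb{F}_{p^{2n}}$, $\eta_1,\eta_2\in\mathcal{H}_Q$ and a shift $\tau$, excluding the trivial case $(\beta_1,\eta_1,\tau)=(\beta_2,\eta_2,0)$. By (\ref{Equation:additive finite fields-5-1}) with $\alpha_1=\alpha_2=0$,
\[
R(\tau)=\sum_{z\in\mathbb{F}_{p^{2n}}}\chi\big((\beta_1-\beta_2g^{2\tau})z^2+(\eta_1-\eta_2g^\tau)z\big)-1 .
\]
By (\ref{Equation:additive finite fields-5-2}), if $\eta_1-\eta_2g^\tau=0$ then $\tau=0$ and $\eta_1=\eta_2$, which forces $\beta_1\ne\beta_2$. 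Hence the polynomial $h(z)$ inside $\chi$ is nonconstant, and there are two cases.
\begin{itemize}
\item If $\beta_1-\beta_2g^{2\tau}\neq0$, then $\deg h=2$. Since $p\ge3$ we have $\gcd(2,p^{2n})=1$, so \textit{Lemma \ref{Lemma:Estimate-additive-character}} bounds the sum by $p^n$.
\item If $\beta_1-\beta_2g^{2\tau}=0$, then $h(z)$ is a nonzero linear polynomial. Its character sum over the whole field is exactly $0$, so $|R(\tau)|=1$.
\end{itemize}
In both cases $\delta_{\max}\le p^n+1$.

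\textbf{Asymptotic optimality.} With $M=p^{2n}Q=K^2N+K$ and $KN=p^{2n}-1$,
\[
\delta_{\mathrm{opt}}=KN\sqrt{\frac{M/K-1}{MN-1}}=(p^{2n}-1)\sqrt{\frac{p^{2n}-1}{p^{2n}(p^{2n}-1)+p^{2n}-1}}=\frac{p^{2n}-1}{p^n}.
\]
Then
\[
\frac{\delta_{\max}}{\delta_{\mathrm{opt}}}\le\frac{p^n(p^n+1)}{p^{2n}-1}=\frac{p^n}{p^n-1}\to1,
\]
so $\rho\to1$.

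The only delicate point is the degree condition. In characteristic $p\ge3$ we have $\gcd(2,p)=1$, so the quadratic sum is genuinely nondegenerate and the Weil-type bound applies. The degenerate linear case must be handled separately, using orthogonality of characters rather than the lemma. Beyond that, the argument is routine bookkeeping.
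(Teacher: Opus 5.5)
\textbf{There is a genuine gap, and the paper's own proof has it too: both rest on (\ref{Equation:additive finite fields-5-1}), which is false for $\tau\neq0$.}

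Your argument follows the paper's proof step for step. In the periodic correlation of the $k$-th rows the column index is $(t+\tau)\bmod N$, with $N=\frac{p^{2n}-1}{Q}$. For $t\ge N-\tau$ the conjugated factor is therefore $s_{k,t+\tau-N}$, evaluated at $g^{kN+t+\tau-N}$. It is not evaluated at $g^{kN+t+\tau}$, which is an entry of the (cyclically) next row. So the substitution $y=kN+t$, $z=g^y$ computes the periodic correlation of the two concatenated length-$(p^{2n}-1)$ sequences, not $\sum_k R_{\mathbf{s}^{m_1}_k,\mathbf{s}^{m_2}_k}(\tau)$.

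Write $A_s=\beta_1-\beta_2g^{2s}$, $B_s=\eta_1-\eta_2g^{s}$, $D_1=\{g^t:0\le t<N-\tau\}$ and $D_2=\{g^t:N-\tau\le t<N\}$. The correct expression is
\[
R(\tau)=\sum_{z\in\mathcal{H}_Q D_1}\chi\big(A_\tau z^2+B_\tau z\big)+\sum_{z\in\mathcal{H}_Q D_2}\chi\big(A_{\tau-N}z^2+B_{\tau-N}z\big),
\]
a sum of two incomplete sums over product sets. \textit{Lemma \ref{Lemma:Estimate-additive-character}} says nothing about such sums. So the delicate point is this wrap-around, not the degree condition, and for $\tau\neq0$ your case analysis bounds the wrong quantity. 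Your injectivity argument uses only $\tau=0$, where (\ref{Equation:additive finite fields-5-1}) is correct, so that part survives.

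\textbf{The claimed bound is actually false for the construction as written.} Take $p=3$, $n=1$, $Q=2$ (so $K=2$, $N=4$), $\mathbb{F}_9=\mathbb{F}_3[i]/(i^2+1)$, $g=1+i$, $\chi(x)=\zeta_3^{\mathrm{Tr}(x)}$ and $\beta=\eta=1$. Recording the entries of $\mathbf{S}^{0,1,1}$ as exponents of $\zeta_3$, its rows are $(1,2,1,2)$ and $(0,1,1,1)$. Its periodic autocorrelation at $\tau=2$ is $4+1=5>p^n+1=4$. By contrast, (\ref{Equation:additive finite fields-5-1}) predicts $\sum_{z}\chi(2z^2+(1+i)z)-1=2$.

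\textbf{A possible repair.} Interleave the rows, $s_{k,t}=\chi(\beta g^{2(k+Qt)}+\eta g^{k+Qt})$. Since $g^{QN}=1$, the cyclic shift in $t$ is exactly $z\mapsto g^{Q\tau}z$ on all of $\mathbb{F}_{p^{2n}}^*$. The complete-sum formula then holds with $g^{Q\tau}$ in place of $g^{\tau}$. The analogue of (\ref{Equation:additive finite fields-5-2}) now requires $\mathcal{H}_Q\cap\langle g^Q\rangle=\{1\}$, i.e.\ $\gcd(Q,N)=1$. Without it, taking $\beta_1=\beta_2=0$ and $\eta_1=\eta_2g^{Q\tau}$ with $1\neq g^{Q\tau}\in\mathcal{H}_Q$ gives $|R(\tau)|=p^{2n}-1$. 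Under that extra hypothesis your two cases go through verbatim.

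\textbf{A smaller slip.} $MN-1=p^{2n}(p^{2n}-1)-1$, so your displayed chain for $\delta_{\mathrm{opt}}$ is wrong. In fact $\delta_{\mathrm{opt}}>(p^{2n}-1)/p^n$, so the conclusion $\rho\le p^n/(p^n-1)$ is unaffected, given the correlation bound.
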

	
	\begin{proof} Let $\beta_1, \beta_2\in \mathbb{F}_{p^{2n}}$,  $\eta_1, \eta_2\in \mathcal{H}_{Q}$ 
		and $0\leq \tau\leq \frac{p^{2n}-1}{Q}-1$ such that $\beta_1=\beta_2$, $\eta_1=\eta_2$ and $\tau=0$ do not hold simultaneously. From (\ref{Construction:additive finite fields-5}), (\ref{Equation:additive finite fields-5-3}) and \textit{Lemma \ref{Lemma:Estimate-additive-character}} we have
		\begin{eqnarray}
			\left|\sum_{z\in \mathbb{F}_{p^{2n}}}\chi\Big((\beta_{1}-\beta_{2}g^{2\tau})z^{2}+(\eta_{1}-\eta_{2}g^{\tau})z\Big)\right|\leq p^{n}.
		\end{eqnarray}
		Then from (\ref{Equation:additive finite fields-5-1}) we have $\delta_{\max}\leq p^{n}+1$ and we prove \textit{Theorem \ref{Theorem: Construction by additive finite fields-optimal}}.
	\end{proof}
	
	\begin{remark}
		The construction in \textit{Theorem \ref{Theorem: Construction by additive finite fields-optimal}}  attains both optimal correlation performance and the maximal overloading exponent under the additive-character framework.
	\end{remark}
	
	\subsection{Extremal Behavior and Tightness of Theorem \ref{Theorem: Construction by additive finite fields-5} and Theorem \ref{Theorem: Construction by additive finite fields-optimal}  }
	
	We conclude this section with explicit extremal examples demonstrating that the leading terms in the correlation bounds of \textit{Theorem \ref{Theorem: Construction by additive finite fields-5}} and \textit{Theorem \ref{Theorem: Construction by additive finite fields-optimal}} are essentially optimal.

	\begin{example}[An extremal configuration attaining the correlation bound]\label{Example:5-extremal configuration} 		
		Let $p=5$, $n=2$ and $Q=26$. From \textit{Theorem \ref{Theorem: Construction by additive finite fields-5}} the resulting near-optimal QCSS has parameters
		\[
		(M,K,N,\delta_{\max})=(10156250,\; 26,\; 24,\; 51),
		\]
		and is defined over a polyphase alphabet of size $5$.
		
		Let $\mathbb{F}_{625} \cong \mathbb{F}_5[x]/(x^4+x^3+2x^2+2)$
		and let $g = x \bmod (x^4+x^3+2x^2+2)$, which is a primitive element of $\mathbb{F}_{625}^*$. Let $\chi$ be the additive character defined by
		$\chi(x)=\zeta_5^{\mathrm{Tr}(x)}$,
		where $\mathrm{Tr}$ denotes the trace function from $\mathbb{F}_{625}$ to $\mathbb{F}_5$.
		Consider the two matrices $\mathbf{S}^{g+1,g,1}$ and $\mathbf{S}^{g,g,1}$ arising from \textit{Construction \ref{Construction:additive finite fields-5}}. Using the correlation expression in (\ref{Equation:additive finite fields-5-1}) together with \textit{Lemma \ref{Lemma:cubic special upper bound}}, we obtain
		\[
		R_{\mathbf{S}^{g+1,g,1},\,\mathbf{S}^{g,g,1}}(0)
		=\sum_{z\in\mathbb{F}_{625}}\chi(z^3)-1=-51.
		\] 		
		Since $2p^{n}+1=51$, this example attains the upper bound in \textit{Theorem \ref{Theorem: Construction by additive finite fields-5}} with equality. Consequently, the bound $\delta_{\max}\le 2p^{n}+1$ in \textit{Theorem \ref{Theorem: Construction by additive finite fields-5}} is optimal 
		for \textit{Construction \ref{Construction:additive finite fields-5}}. 			
	\end{example}

	The corresponding matrices
	$\mathbf{S}^{g+1,g,1}$ and $\mathbf{S}^{g,g,1}$ are listed below, where each entry represents the exponent of $\zeta_5$.
	
	\begin{eqnarray*}
		\mathbf{S}^{g+1,g,1}={\tiny
			\left[\begin{array}{cccccccccccccccccccccccc}
				1 & 2 & 3 & 2 & 3 & 2 & 1 & 1 & 3 & 2 & 4 & 2 & 2 & 0 & 3 & 1 & 3 & 4 & 3 & 4 & 4 & 2 & 3 & 0 \\
				4 & 4 & 1 & 2 & 0 & 0 & 1 & 4 & 4 & 0 & 0 & 1 & 1 & 2 & 1 & 3 & 0 & 3 & 3 & 1 & 4 & 3 & 1 & 1 \\
				0 & 0 & 1 & 0 & 1 & 3 & 4 & 3 & 2 & 0 & 4 & 0 & 4 & 2 & 3 & 0 & 4 & 2 & 2 & 3 & 2 & 1 & 1 & 0 \\
				2 & 0 & 0 & 2 & 3 & 3 & 4 & 3 & 2 & 1 & 0 & 1 & 3 & 0 & 0 & 1 & 4 & 1 & 3 & 3 & 1 & 1 & 0 & 1 \\
				1 & 3 & 3 & 0 & 3 & 4 & 1 & 2 & 1 & 4 & 3 & 2 & 3 & 3 & 4 & 2 & 0 & 1 & 3 & 2 & 4 & 0 & 3 & 2 \\
				3 & 3 & 1 & 3 & 2 & 1 & 1 & 1 & 1 & 1 & 1 & 2 & 0 & 2 & 0 & 1 & 2 & 1 & 1 & 4 & 3 & 2 & 3 & 0 \\
				0 & 2 & 3 & 1 & 0 & 0 & 1 & 4 & 2 & 0 & 4 & 2 & 3 & 2 & 1 & 2 & 0 & 0 & 2 & 2 & 3 & 0 & 1 & 1 \\
				4 & 4 & 3 & 3 & 2 & 3 & 3 & 4 & 1 & 2 & 4 & 3 & 0 & 2 & 2 & 0 & 1 & 3 & 0 & 2 & 1 & 4 & 4 & 0 \\
				1 & 0 & 3 & 2 & 1 & 4 & 0 & 2 & 2 & 0 & 4 & 0 & 1 & 1 & 4 & 4 & 0 & 4 & 3 & 4 & 1 & 1 & 3 & 3 \\
				0 & 2 & 1 & 3 & 4 & 1 & 0 & 1 & 0 & 4 & 1 & 2 & 3 & 4 & 3 & 1 & 0 & 3 & 0 & 1 & 4 & 3 & 0 & 2 \\
				2 & 0 & 0 & 3 & 4 & 1 & 0 & 2 & 3 & 0 & 0 & 2 & 2 & 4 & 1 & 2 & 4 & 1 & 3 & 1 & 3 & 2 & 2 & 2 \\
				2 & 1 & 3 & 2 & 1 & 4 & 1 & 3 & 0 & 1 & 3 & 1 & 1 & 2 & 4 & 0 & 2 & 0 & 2 & 4 & 4 & 3 & 3 & 3 \\
				0 & 0 & 3 & 0 & 1 & 3 & 3 & 2 & 1 & 0 & 1 & 0 & 1 & 1 & 4 & 1 & 0 & 1 & 3 & 3 & 4 & 3 & 0 & 1 \\
				2 & 3 & 0 & 0 & 0 & 4 & 4 & 4 & 2 & 4 & 3 & 4 & 1 & 3 & 3 & 4 & 3 & 3 & 4 & 1 & 0 & 2 & 0 & 2 \\
				3 & 4 & 1 & 4 & 4 & 2 & 0 & 1 & 1 & 4 & 4 & 0 & 2 & 3 & 1 & 3 & 4 & 1 & 2 & 1 & 2 & 2 & 0 & 0 \\
				0 & 4 & 0 & 1 & 1 & 4 & 2 & 3 & 3 & 1 & 1 & 3 & 4 & 1 & 0 & 1 & 3 & 3 & 2 & 0 & 4 & 0 & 3 & 0 \\
				4 & 1 & 3 & 3 & 1 & 2 & 2 & 2 & 4 & 3 & 1 & 1 & 2 & 0 & 0 & 1 & 0 & 1 & 3 & 3 & 1 & 4 & 2 & 3 \\
				3 & 2 & 0 & 3 & 3 & 0 & 3 & 4 & 3 & 3 & 0 & 2 & 4 & 2 & 1 & 1 & 3 & 1 & 3 & 3 & 0 & 2 & 0 & 1 \\
				2 & 1 & 1 & 2 & 0 & 1 & 4 & 2 & 1 & 1 & 3 & 2 & 2 & 0 & 0 & 1 & 3 & 0 & 0 & 2 & 3 & 0 & 1 & 0 \\
				3 & 4 & 4 & 1 & 3 & 0 & 1 & 3 & 4 & 0 & 4 & 3 & 4 & 3 & 1 & 1 & 2 & 4 & 3 & 3 & 2 & 4 & 2 & 3 \\
				3 & 3 & 1 & 2 & 2 & 0 & 0 & 1 & 0 & 4 & 3 & 0 & 3 & 0 & 1 & 4 & 0 & 0 & 4 & 3 & 4 & 2 & 2 & 4 \\
				1 & 0 & 0 & 2 & 0 & 2 & 0 & 1 & 2 & 0 & 0 & 4 & 4 & 0 & 0 & 0 & 3 & 4 & 1 & 0 & 4 & 3 & 2 & 4 \\
				2 & 0 & 1 & 1 & 4 & 4 & 1 & 1 & 4 & 0 & 0 & 3 & 1 & 0 & 4 & 4 & 1 & 2 & 4 & 4 & 0 & 3 & 4 & 1 \\
				3 & 0 & 0 & 0 & 2 & 2 & 4 & 2 & 0 & 0 & 3 & 4 & 4 & 1 & 1 & 0 & 0 & 0 & 3 & 3 & 3 & 1 & 0 & 4 \\
				1 & 4 & 2 & 0 & 1 & 4 & 3 & 2 & 1 & 2 & 4 & 1 & 4 & 4 & 4 & 0 & 1 & 3 & 3 & 3 & 4 & 0 & 3 & 3 \\
				3 & 3 & 2 & 3 & 4 & 1 & 1 & 2 & 3 & 3 & 0 & 0 & 1 & 3 & 4 & 2 & 2 & 4 & 0 & 0 & 0 & 2 & 2 & 4
			\end{array}\right]  }
	\end{eqnarray*}
	
	\begin{eqnarray*}
		\mathbf{S}^{g,g,1}={\tiny
			\left[\begin{array}{cccccccccccccccccccccccc}
				2 & 2 & 2 & 3 & 2 & 2 & 4 & 0 & 0 & 3 & 3 & 4 & 1 & 0 & 4 & 2 & 4 & 0 & 1 & 2 & 3 & 2 & 0 & 3 \\
				3 & 4 & 1 & 0 & 3 & 4 & 4 & 1 & 1 & 3 & 3 & 0 & 3 & 4 & 0 & 3 & 2 & 4 & 2 & 0 & 3 & 4 & 3 & 4 \\
				2 & 2 & 0 & 3 & 3 & 3 & 2 & 0 & 0 & 0 & 0 & 3 & 3 & 4 & 1 & 4 & 2 & 2 & 4 & 0 & 4 & 3 & 2 & 1 \\
				0 & 0 & 4 & 3 & 1 & 3 & 4 & 4 & 3 & 4 & 1 & 0 & 2 & 1 & 1 & 4 & 3 & 0 & 1 & 3 & 0 & 3 & 3 & 4 \\
				4 & 0 & 2 & 1 & 2 & 3 & 4 & 3 & 0 & 4 & 4 & 1 & 4 & 3 & 1 & 3 & 3 & 0 & 4 & 0 & 0 & 0 & 2 & 1 \\
				2 & 2 & 3 & 0 & 3 & 1 & 4 & 3 & 2 & 1 & 1 & 4 & 2 & 3 & 2 & 4 & 0 & 3 & 3 & 0 & 1 & 0 & 4 & 0 \\
				3 & 1 & 4 & 2 & 1 & 4 & 4 & 1 & 0 & 3 & 0 & 4 & 1 & 2 & 3 & 0 & 2 & 0 & 1 & 4 & 4 & 3 & 3 & 2 \\
				1 & 4 & 1 & 1 & 0 & 1 & 2 & 3 & 3 & 2 & 0 & 2 & 2 & 2 & 2 & 4 & 0 & 0 & 4 & 3 & 2 & 3 & 3 & 2 \\
				2 & 1 & 0 & 2 & 2 & 2 & 2 & 4 & 4 & 3 & 0 & 4 & 2 & 2 & 1 & 3 & 1 & 4 & 2 & 0 & 0 & 1 & 1 & 2 \\
				2 & 3 & 0 & 0 & 3 & 1 & 1 & 2 & 1 & 0 & 4 & 0 & 2 & 4 & 0 & 4 & 4 & 3 & 0 & 4 & 2 & 2 & 3 & 4 \\
				4 & 3 & 3 & 2 & 1 & 3 & 4 & 2 & 0 & 1 & 4 & 1 & 1 & 0 & 3 & 0 & 1 & 3 & 2 & 4 & 0 & 2 & 0 & 4 \\
				0 & 1 & 4 & 0 & 0 & 1 & 4 & 2 & 3 & 1 & 0 & 3 & 3 & 4 & 0 & 1 & 0 & 0 & 1 & 0 & 2 & 3 & 3 & 4 \\
				1 & 3 & 4 & 4 & 0 & 4 & 4 & 0 & 0 & 4 & 4 & 0 & 0 & 3 & 2 & 4 & 3 & 3 & 2 & 4 & 3 & 2 & 3 & 2 \\
				1 & 3 & 1 & 4 & 1 & 4 & 1 & 0 & 0 & 3 & 4 & 2 & 2 & 3 & 2 & 3 & 2 & 2 & 1 & 3 & 1 & 2 & 3 & 4 \\
				4 & 4 & 1 & 1 & 1 & 3 & 2 & 4 & 4 & 1 & 1 & 1 & 0 & 1 & 2 & 3 & 2 & 0 & 3 & 2 & 3 & 1 & 3 & 2 \\
				3 & 2 & 1 & 3 & 4 & 4 & 4 & 1 & 0 & 1 & 0 & 0 & 0 & 4 & 2 & 2 & 0 & 3 & 0 & 3 & 2 & 3 & 2 & 4 \\
				1 & 1 & 4 & 2 & 3 & 2 & 2 & 1 & 3 & 0 & 0 & 2 & 3 & 4 & 4 & 3 & 1 & 2 & 0 & 3 & 2 & 2 & 4 & 0 \\
				0 & 0 & 1 & 2 & 4 & 1 & 0 & 3 & 4 & 3 & 4 & 3 & 3 & 2 & 4 & 0 & 0 & 2 & 2 & 0 & 4 & 2 & 1 & 2 \\
				3 & 2 & 4 & 0 & 4 & 1 & 1 & 0 & 0 & 1 & 3 & 0 & 0 & 4 & 3 & 3 & 0 & 3 & 3 & 1 & 0 & 2 & 0 & 0 \\
				0 & 0 & 3 & 0 & 2 & 1 & 3 & 1 & 1 & 2 & 3 & 1 & 1 & 3 & 4 & 3 & 0 & 4 & 4 & 1 & 1 & 1 & 0 & 2 \\
				1 & 3 & 3 & 4 & 4 & 2 & 1 & 2 & 3 & 4 & 2 & 1 & 1 & 0 & 1 & 0 & 1 & 3 & 0 & 2 & 3 & 3 & 3 & 2 \\
				0 & 4 & 3 & 2 & 4 & 4 & 3 & 4 & 0 & 2 & 4 & 0 & 3 & 4 & 3 & 1 & 2 & 4 & 2 & 4 & 0 & 3 & 4 & 0 \\
				0 & 4 & 2 & 4 & 0 & 4 & 0 & 0 & 3 & 4 & 2 & 0 & 2 & 0 & 2 & 1 & 2 & 2 & 4 & 1 & 2 & 4 & 1 & 4 \\
				1 & 2 & 2 & 1 & 0 & 0 & 0 & 2 & 3 & 4 & 4 & 0 & 0 & 0 & 4 & 2 & 3 & 3 & 4 & 0 & 1 & 1 & 2 & 2 \\
				3 & 4 & 1 & 2 & 2 & 2 & 0 & 3 & 3 & 2 & 2 & 4 & 2 & 2 & 3 & 4 & 3 & 3 & 4 & 2 & 1 & 0 & 3 & 2 \\
				2 & 0 & 1 & 4 & 0 & 0 & 0 & 4 & 4 & 4 & 2 & 0 & 2 & 1 & 1 & 4 & 4 & 2 & 1 & 4 & 1 & 3 & 4 & 3
			\end{array}\right]  }
	\end{eqnarray*}

	\begin{example}\label{Example:extremal example-3} 
		Let $p=3$, $n=2$ and $Q=10$. 	
		Let $\mathbb{F}_{81}=\mathbb{F}_3(\alpha)\cong \mathbb{F}_3[x]/(f(x))$, $f(x)=x^4+x+2$,
		where $f(x)$ is irreducible over $\mathbb{F}_3$, and let $\alpha$ denote the residue class of $x$.
		By Theorem \ref{Theorem: Construction by additive finite fields-optimal} the resulting optimal QCSS has parameters
		\[
		(M,K,N,\delta_{\max})=(810, \ 10, \ 8, \ 10),
		\]
		and is defined over a polyphase alphabet of size $3$.	
		
		Let $\hbox{Tr}(x)$ denote the trace function from $\mathbb{F}_{81}$ to $\mathbb{F}_3$.
		Set $c=\alpha^2\in\mathbb{F}_{81}^{*}$.
		Define a nontrivial additive character $\chi$ by $\chi(x)=\zeta_{3}^{\hbox{Tr}(cx)}$.
		Consider the two matrices $\mathbf{S}^{0,g+1,1}$ and $\mathbf{S}^{0,g,-1}$ arising from \textit{Construction \ref{Construction:additive finite fields-optimal}}. Using the correlation expression in (\ref{Equation:additive finite fields-5-1}) together with \textit{Lemma \ref{Lemma:cubic special upper bound-ternary}}, we obtain
		\[
		R_{\mathbf{S}^{0,g+1,1},\,\mathbf{S}^{0,g,-1}}(0)
		=\sum_{z\in\mathbb{F}_{81}}\chi(z^2+2z)-1=-10.
		\] 		
		Consequently, the upper bound in \textit{Theorem \ref{Theorem: Construction by additive finite fields-optimal}} is tight.
		
		We explicitly list the two matrices 
		$\mathbf{S}^{0,g+1,1}$ and $\mathbf{S}^{0,g,-1}$ below,  where each entry represents the exponent of $\zeta_3$.
		$$
		\mathbf{S}^{0,g+1,1}={\scriptsize
			\begin{bmatrix}
				0 & 1 & 0 & 1 & 2 & 2 & 1 & 1 \\
				2 & 0 & 1 & 0 & 1 & 0 & 2 & 2 \\
				2 & 1 & 1 & 1 & 2 & 0 & 2 & 2 \\
				2 & 0 & 1 & 1 & 0 & 0 & 2 & 0 \\
				1 & 0 & 1 & 2 & 0 & 2 & 1 & 1 \\
				0 & 1 & 1 & 1 & 2 & 1 & 2 & 1 \\
				0 & 1 & 2 & 2 & 1 & 0 & 1 & 1 \\
				2 & 2 & 1 & 0 & 1 & 1 & 0 & 2 \\
				0 & 0 & 2 & 0 & 1 & 2 & 1 & 1 \\
				0 & 0 & 2 & 1 & 2 & 1 & 0 & 1
			\end{bmatrix}, } \quad 
		\mathbf{S}^{0,g,-1}={\scriptsize
			\begin{bmatrix}
				0 & 0 & 1 & 0 & 1 & 0 & 2 & 2 \\
				0 & 1 & 0 & 1 & 0 & 2 & 0 & 2 \\
				0 & 1 & 2 & 1 & 1 & 2 & 0 & 0 \\
				1 & 1 & 2 & 2 & 1 & 2 & 0 & 2 \\
				1 & 1 & 0 & 0 & 1 & 2 & 2 & 0 \\
				0 & 0 & 0 & 0 & 1 & 1 & 1 & 2 \\
				2 & 0 & 2 & 2 & 0 & 2 & 1 & 0 \\
				0 & 0 & 2 & 2 & 2 & 1 & 2 & 0 \\
				0 & 1 & 1 & 0 & 0 & 0 & 1 & 1 \\
				2 & 1 & 2 & 1 & 2 & 0 & 0 & 0
			\end{bmatrix}.}
		$$
	\end{example}

	\section{Tunable Constructions and Structural Robustness of Cubic Scaling}\label{sec:constructions}
	
	The constructions in Section \ref{Section: Constructions-Additive-Character} achieve
	$$
	M=K^2N+K
	\quad\text{and}\quad
	M=K^3N^2+2K^2N+K,
	$$
	corresponding to the quadratic and cubic regimes respectively with fixed parameters.
	We now extend this framework to obtain tunable constructions that
	preserve the cubic exponent while allowing flexible parameter choices.
	This demonstrates that cubic scaling is not tied to a specific construction,
	but applicable across a broad class of designs.
	
	\subsection{Generalized Construction}
	
	Let $p^{n}$ be a prime power and let $g$ be a primitive element of \(\mathbb{F}_{p^{2n}}\). Let \(\chi\) be a non-trivial additive character of \(\mathbb{F}_{p^{2n}}\) and let $\psi$ be a multiplicative character of \(\mathbb{F}_{p^{2n}}\) with \(\textup{ord}(\psi)=\Delta\), where $\Delta\mid p^{2n}-1$ and $\Delta>1$. 
	Denote $\psi_{1}$ by
	$$
	\psi_{1}(z)=\left\{\begin{array}{ll}
		\psi(z),  & z\in\mathbb{F}_{p^{2n}}^{*}, \\
		1, & z=0.	
	\end{array}	
	\right.
	$$	
	Let $Q\in\mathbb{Z}$ with $Q\mid p^{2n}-1$ and $1< Q< p^{2n}-1$. Denote
	$$
	\mathcal{H}_{Q}=\left\{g^{\frac{p^{2n}-1}{Q}j}: \ j=0,1,\cdots, Q-1\right\}.
	$$
	For $r\in \mathbb{Z}$ with $1\leq r\leq \Delta-1$, $\eta\in \mathcal{H}_{Q}$ and $\lambda\in \mathbb{F}_{p^{2n}}$ we
	define
	$$
	s_{k,t}^{r; \eta, \lambda}=\psi_1^{r}\left(g^{k\frac{p^{2n}-1}{Q}+t}+\eta\right)\chi\left(\lambda g^{k\frac{p^{2n}-1}{Q}+t}\right), \quad  0\leq k \leq Q-1, \ 0\leq t\leq \frac{p^{2n}-1}{Q}-1
	$$
	and
	\[
	\mathbf{S}^{r; \eta, \lambda} = 
	\begin{bmatrix}
		\mathbf{s}_{0}^{r; \eta, \lambda} \\
		\mathbf{s}_{1}^{r; \eta, \lambda} \\
		\vdots \\
		\mathbf{s}_{Q-1}^{r; \eta, \lambda}
	\end{bmatrix}
	= 
	\begin{bmatrix}
		s_{0,0}^{r; \eta, \lambda} & s_{0,1}^{r; \eta, \lambda} & \cdots & s_{0, \frac{p^{2n}-1}{Q}-1}^{r; \eta, \lambda} \\
		s_{1,0}^{r; \eta, \lambda} & s_{1,1}^{r; \eta, \lambda} & \cdots & s_{1, \frac{p^{2n}-1}{Q}-1}^{r; \eta, \lambda} \\
		\vdots & \vdots & \ddots & \vdots \\
		s_{Q-1, 0}^{r; \eta, \lambda} & s_{Q-1, 1}^{r; \eta, \lambda} & \cdots & s_{Q-1, \frac{p^{2n}-1}{Q}-1}^{r; \eta, \lambda}
	\end{bmatrix}.
	\]		
	
	\begin{construction}\label{Construction:additive and multiplicative finite fields} 
		Define the set
		$$			
		\mathcal{S}= \{\mathbf{S}^{r; \eta, \lambda}: \ 1\leq r\leq \Delta-1, \ \eta\in \mathcal{H}_{Q},  \ \lambda\in \mathbb{F}_{p^{2n}}\}.
		$$		
	\end{construction}
	
	\begin{construction}\label{Construction:additive and multiplicative finite fields-optimal} 
		Define the set
		$$			
		\mathcal{S}_1= \{\mathbf{S}^{r; \eta, 0}: \ 1\leq r\leq \Delta-1, \ \eta\in \mathcal{H}_{Q}\}.
		$$		
	\end{construction}
	
	\subsection{Performance Guarantee}
	
	\begin{theorem}\label{Theorem: Construction by additive and multiplicative finite fields}
		\textit{Construction \ref{Construction:additive and multiplicative finite fields}} produces an asymptotically near-optimal $$
		\left ((\Delta-1)Qp^{2n}, \ Q, \ \frac{p^{2n}-1}{Q}, \ \delta_{\text{max}}\right )-\hbox{QCSS}
		$$ with:
		\begin{itemize}
			
			\item $\delta_{\text{max}} \leq 2p^{n} + 3$;
			
			\item Alphabet size: $\Delta p$ \  $($tunable via $\Delta$$)$, where $p$ 
			is the characteristic of the finite field $\mathbb{F}_{q}$.
		\end{itemize}		
	\end{theorem}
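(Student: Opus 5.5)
The plan is to follow the template of Theorem \ref{Theorem: Construction by additive finite fields-5}. First I will reduce the periodic correlation to a single mixed character sum over $\mathbb{F}_{p^{2n}}$, then bound that sum with Lemma \ref{Lemma:Estimate-additive-multiplicative-character}, and finally account for the error caused by the convention $\psi_1(0)=1$.

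\emph{Correlation identity.} Writing $y=k\frac{p^{2n}-1}{Q}+t$ and $z=g^{y}$ as in (\ref{Equation:additive finite fields-5-1}), I obtain
\[
R_{\mathbf{S}^{r_1;\eta_1,\lambda_1},\mathbf{S}^{r_2;\eta_2,\lambda_2}}(\tau)=\sum_{z\in\mathbb{F}_{p^{2n}}^{*}}\psi_1^{r_1}(z+\eta_1)\,\overline{\psi_1^{r_2}(g^{\tau}z+\eta_2)}\,\chi\big((\lambda_1-\lambda_2g^{\tau})z\big).
\]
\begin{itemize}
\item Replacing $\psi_1$ by $\psi$ (with $\psi(0)=0$) changes only the terms with $z=-\eta_1$ or $z=-\eta_2g^{-\tau}$. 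Each of these terms has modulus at most $1$.
\item Adding back the term $z=0$ costs at most $1$ more.
\end{itemize}
Hence
\[
|R|\le \Big|\sum_{z\in\mathbb{F}_{p^{2n}}}\psi\big((z+\eta_1)^{r_1}(g^{\tau}z+\eta_2)^{\Delta-r_2}\big)\chi\big((\lambda_1-\lambda_2g^{\tau})z\big)\Big|+3,
\]
using $\overline{\psi^{r_2}}=\psi^{\Delta-r_2}$ on $\mathbb{F}^*$. This is where the constant $3$ in $2p^n+3$ comes from.

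\emph{Case analysis.} Set $f(z)=(z+\eta_1)^{r_1}(z+\eta_2g^{-\tau})^{\Delta-r_2}$ up to a nonzero constant, and $h(z)=(\lambda_1-\lambda_2g^{\tau})z$.
\begin{itemize}
\item \textbf{Distinct roots.} If $\eta_1\neq\eta_2g^{-\tau}$, then $f$ has two distinct roots. It is not a constant times a $\Delta$-th power, because the exponent $r_1$ satisfies $1\le r_1\le\Delta-1$ and so is not divisible by $\Delta$. Lemma \ref{Lemma:Estimate-additive-multiplicative-character} with $m=2$ and $\deg h\le1$ gives the bound $2p^{n}$. If $h$ is constant, the bound is $(0+2-1)p^n=p^n$, which is even better; I will read the lemma with $\deg h$ interpreted as $0$ in that case.
\item \textbf{Coinciding roots.} By (\ref{Equation:additive finite fields-5-2}), $\eta_1=\eta_2g^{-\tau}$ forces $\tau=0$ and $\eta_1=\eta_2$. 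Then $f=(z+\eta_1)^{r_1-r_2+\Delta}$.
  \begin{itemize}
  \item If $r_1\neq r_2$, the exponent is not divisible by $\Delta$. The lemma applies with $m=1$ and gives at most $p^n$.
  \item If $r_1=r_2$, then $\lambda_1\neq\lambda_2$ (otherwise the pair is trivial, i.e.\ the in-phase autocorrelation). The sum is then $\sum_{z\neq-\eta_1}\chi((\lambda_1-\lambda_2)z)$, which has modulus $1$.
  \end{itemize}
\end{itemize}
In all cases $\delta_{\max}\le 2p^n+3$.

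\emph{Parameters and near-optimality.} I will check the remaining claims directly.
\begin{itemize}
\item The entries are products of a $\Delta$-th root of unity and a $p$-th root of unity, so the alphabet size is $\Delta p$.
\item The count is $M=(\Delta-1)Qp^{2n}$. Distinctness of the matrices follows from the nonzero correlation bound, or can simply be counted.
\item For near-optimality, $KN=p^{2n}-1$. Since $M/K\to\infty$ relative to $N$, we get $\delta_{\mathrm{opt}}\sim\sqrt{KN}\sim p^n$, and therefore $\rho\to2$.
\end{itemize}

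\emph{Main obstacle.} I expect the delicate step to be verifying the ``not a $\Delta$-th power'' hypothesis, together with bookkeeping the boundary terms caused by the convention $\psi_1(0)=1$, so that the additive constant comes out as exactly $3$. The degenerate case where $h$ is constant also needs the appropriate reading of Lemma \ref{Lemma:Estimate-additive-multiplicative-character}.
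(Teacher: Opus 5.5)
Your argument follows the paper's route. You reduce $R(\tau)$ to a complete mixed sum over $\mathbb{F}_{p^{2n}}$, pay $+3$ for the convention $\psi_1(0)=1$ and the missing $z=0$ term, and apply Lemma~\ref{Lemma:Estimate-additive-multiplicative-character}. Your case split is more careful than the paper's proof. The paper passes over the case $\tau=0$, $\eta_1=\eta_2$, $r_1=r_2$, $\lambda_1\neq\lambda_2$, where $(z+\eta_1)^{\Delta}$ is a $\Delta$-th power and the lemma does not apply; your direct evaluation there is correct.

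The genuine gap is the first step, which you import from (\ref{Equation:additive finite fields-5-1}) and which the paper also uses in (\ref{Equation:multiplicative finite fields-1}). It is false for $\tau\neq0$. Periodic correlation shifts each row cyclically modulo $N=\frac{p^{2n}-1}{Q}$, but the entries are indexed by $g^{kN+t}$. So for $t+\tau\ge N$ the second matrix is evaluated at $g^{kN+t+\tau-N}=g^{-N}\cdot g^{kN+t+\tau}$, not at $g^{kN+t+\tau}$, and $g^{-N}\neq 1$ because $Q>1$. Write $F_i(z)=\psi_1^{r_i}(z+\eta_i)\chi(\lambda_i z)$ and $A_\tau=\{kN+t:\ 0\le k<Q,\ 0\le t<N-\tau\}$. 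What you actually have is
\[
R(\tau)=\sum_{y\in A_\tau}F_1(g^{y})\overline{F_2(g^{y+\tau})}+\sum_{0\le y\le p^{2n}-2,\ y\notin A_\tau}F_1(g^{y})\overline{F_2(g^{y+\tau-N})}.
\]
This is a sum of two incomplete sums, and Lemma~\ref{Lemma:Estimate-additive-multiplicative-character}, which only handles sums over all of $\mathbb{F}_{p^{2n}}$, controls neither. This is not a technicality. For $\mathbf{S}^{1;1,0}$ in Example~\ref{Extremal examples:mixed optimmal}, the in-row periodic autocorrelation at $\tau=1$ equals $-2+\sqrt2+2i$, while your identity predicts $-1+(1+\sqrt2)i$.

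As written, your proof (like the paper's) establishes the bound only at $\tau=0$, where the identity is exact. That covers the in-phase correlations, and hence the distinctness of the $M$ matrices once $p^{2n}-1>2p^n+3$. It does not cover $\delta_{\max}$.

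A natural repair is to index entries by $g^{k+Qt}$ instead. Since $g^{QN}=1$, a cyclic row shift by $\tau$ is then exactly $z\mapsto g^{Q\tau}z$, and your argument goes through verbatim with $g^{\tau}$ replaced by $g^{Q\tau}$. The one change is that the root-coincidence step now needs $g^{Q\tau}\in\mathcal{H}_Q\Rightarrow\tau=0$, which holds if and only if $\gcd(Q,N)=1$. That hypothesis is genuinely needed: if $d=\gcd(Q,N)>1$ and $\tau=N/d$, the distinct matrices $\mathbf{S}^{r;\eta,\lambda}$ and $\mathbf{S}^{r;g^{Q\tau}\eta,\,g^{-Q\tau}\lambda}$ have correlation of modulus at least $p^{2n}-3$ at shift $\tau$.
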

	
	\begin{proof}
		Let $1\leq r_1, r_2\leq \Delta-1$, 	$\eta_1, \eta_2\in \mathcal{H}_{Q}$,
		$\lambda_1, \lambda_2\in \mathbb{F}_{p^{2n}}$ and $0\leq \tau\leq \frac{p^{2n}-1}{Q}-1$ such that
		$r_1=r_2$, $\eta_1=\eta_2$, $\lambda_1=\lambda_2$ and $\tau=0$ do not hold simultaneously. 		
		We have
		\begin{eqnarray}\label{Equation:multiplicative finite fields-1}
			&&R_{\mathbf{S}^{r_1; \eta_1, \lambda_1}, \mathbf{S}^{r_2; \eta_2, \lambda_2}}(\tau)
			=\sum_{k = 0}^{Q-1}\sum_{t = 0}^{\frac{p^{2n}-1}{Q}-1}s_{k,t}^{r_1; \eta_1, \lambda_1}
			(s_{k,t+\tau}^{r_2; \eta_2, \lambda_2})^{*} \nonumber \\
			&&=\sum_{k = 0}^{Q-1}\sum_{t = 0}^{\frac{p^{2n}-1}{Q}-1}
			\psi_1^{r_1}\left(g^{k\frac{p^{2n}-1}{Q}+t}+\eta_1\right)\chi\left(\lambda_1 g^{k\frac{p^{2n}-1}{Q}+t}\right) \nonumber \\ 
			&&\qquad\times\left(\psi_1^{r_2}\left(g^{k\frac{p^{2n}-1}{Q}+t+\tau}+\eta_2\right)
			\chi\left(\lambda_2 g^{k\frac{p^{2n}-1}{Q}+t+\tau}\right)\right)^{*} \nonumber	\\
			&&=\sum_{y=0}^{p^{2n}-2}\psi_1\left(\left(g^{y}+\eta_1\right)^{r_1}
			\left(g^{y+\tau}+\eta_2\right)^{\Delta-r_2} \right)\chi\left((\lambda_1-\lambda_2 g^{\tau})g^{y}\right) \nonumber\\	
			&&=\sum_{z\in\mathbb{F}_{p^{2n}}^{*}}\psi_1\left(\left(z+\eta_1\right)^{r_1}
			\left(g^{\tau}z+\eta_2\right)^{\Delta-r_2}\right)\chi\left((\lambda_1-\lambda_2 g^{\tau})z\right).		
		\end{eqnarray}	
		Suppose that $\eta_1=g^{-\tau}\eta_2$. Then we have $g^{\tau}=\eta_2\eta_1^{-1}$. 
		That is to say, $g^{\tau Q}=1$ since $\eta_1, \eta_2\in \mathcal{H}_{Q}$. So we get $p^{2n}-1\mid \tau Q$, which yields
		$$
		\frac{p^{2n}-1}{Q} \mid \tau. 
		$$
		Then $\tau=0$ since $0\leq\tau\leq \frac{p^{2n}-1}{Q}-1$. Therefore,
		\begin{eqnarray}\label{Equation:multiplicative finite fields-trivial}
			\eta_1=g^{-\tau}\eta_2 \Longleftrightarrow \eta_1=\eta_2 \quad\hbox{and} \quad \tau=0.
		\end{eqnarray}	
		Then from (\ref{Equation:multiplicative finite fields-1}), (\ref{Equation:multiplicative finite fields-trivial}) and \textit{Lemma \ref{Lemma:Estimate-additive-multiplicative-character}}, we have	
		\begin{eqnarray*}	
			&&\left|R_{\mathbf{S}^{r_1; \eta_1, \lambda_1}, \mathbf{S}^{r_2; \eta_2, \lambda_2}}(\tau)\right| \\
			&&\leq \left|\sum_{z\in\mathbb{F}_{p^{2n}}}\psi_1\left(\left(z+\eta_1\right)^{r_1}
			\left(g^{\tau}z+\eta_2\right)^{\Delta-r_2}\right)\chi\left((\lambda_1-\lambda_2 g^{\tau})z\right)\right|+1 \\
			&&\leq \left|\sum_{z\in\mathbb{F}_{p^{2n}}}\psi\left(\left(z+\eta_1\right)^{r_1}
			\left(g^{\tau}z+\eta_2\right)^{\Delta-r_2}\right)\chi\left((\lambda_1-\lambda_2 g^{\tau})z\right)\right|+3 \\
			&&\leq 2p^{n}+3.
		\end{eqnarray*}	
		Hence, 
		$$
		\delta_{\max}\leq 2p^{n}+3,
		$$
		and this completes the proof for \textit{Theorem \ref{Theorem: Construction by additive and multiplicative finite fields}}.
	\end{proof}
	
	\begin{theorem}\label{Theorem: Construction by additive and multiplicative finite fields-optimal}
		\textit{Construction \ref{Construction:additive and multiplicative finite fields-optimal}} produces an asymptotically near-optimal $$
		\left((\Delta-1)Q, \ Q, \ \frac{p^{2n}-1}{Q}, \ \delta_{\text{max}}\right )-\hbox{QCSS}
		$$ with:
		\begin{itemize}
			
			\item $\delta_{\text{max}} \leq p^{n} + 3$;
			
			\item Alphabet size: $\Delta$.
		\end{itemize}		
	\end{theorem}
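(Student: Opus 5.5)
This is the special case $\lambda_1=\lambda_2=0$ of the proof of \textit{Theorem \ref{Theorem: Construction by additive and multiplicative finite fields}}: the additive character disappears and only a pure multiplicative character sum remains. Fix $1\le r_1,r_2\le\Delta-1$, $\eta_1,\eta_2\in\mathcal{H}_Q$ and $0\le\tau\le\frac{p^{2n}-1}{Q}-1$, not all of $r_1=r_2$, $\eta_1=\eta_2$, $\tau=0$ holding. Specializing (\ref{Equation:multiplicative finite fields-1}) gives
\[
R_{\mathbf{S}^{r_1;\eta_1,0},\mathbf{S}^{r_2;\eta_2,0}}(\tau)=\sum_{z\in\mathbb{F}_{p^{2n}}^{*}}\psi_1\!\left((z+\eta_1)^{r_1}(g^{\tau}z+\eta_2)^{\Delta-r_2}\right).
\]
The set size $(\Delta-1)Q$, flock size $Q$ and length $\frac{p^{2n}-1}{Q}$ are read off directly. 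The alphabet is the $\Delta$-th roots of unity, since $\psi_1$ takes values in them.

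The plan is to pass from $\psi_1$ to $\psi$ and from $\mathbb{F}_{p^{2n}}^{*}$ to $\mathbb{F}_{p^{2n}}$, keeping track of the error. The sum over $\mathbb{F}_{p^{2n}}^*$ differs from the full sum by the single term $z=0$. Replacing $\psi_1$ by $\psi$ changes only the terms at $z=-\eta_1$ and $z=-g^{-\tau}\eta_2$, where the argument vanishes. Together these account for at most $3$ unimodular terms, so
\[
|R|\le\Big|\sum_{z\in\mathbb{F}_{p^{2n}}}\psi\big((z+\eta_1)^{r_1}(g^\tau z+\eta_2)^{\Delta-r_2}\big)\Big|+3.
\]

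Next, apply \textit{Lemma \ref{Lemma:Estimate-additive-multiplicative-character}} with $h=0$, i.e.\ the trivial additive character, which is the Weil bound for pure multiplicative sums: $|\sum\psi(f)|\le(m-1)q^{1/2}$. Here $f$ has at most $m=2$ distinct roots, giving a bound $p^n$, so $\delta_{\max}\le p^n+3$.

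The main obstacle is checking the hypothesis that $f$ is not a constant times a $\Delta$-th power. I would split into two cases using (\ref{Equation:multiplicative finite fields-trivial}).
\begin{itemize}
\item If $(\eta_1,\tau)\ne(\eta_2,0)$, the roots $-\eta_1$ and $-g^{-\tau}\eta_2$ are distinct. The multiplicity $r_1$ of the first root lies in $[1,\Delta-1]$, so $f$ is not a constant times a $\Delta$-th power.
\item If $\eta_1=\eta_2$ and $\tau=0$, then $r_1\ne r_2$. Now $f=(z+\eta_1)^{\Delta+r_1-r_2}$ with $0<|r_1-r_2|<\Delta$, so the exponent is not divisible by $\Delta$. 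Then $m=1$ and the bound is $0\cdot q^{1/2}$. Indeed the sum is then $\sum_{z}\psi^{r_1-r_2}(z+\eta_1)=0$ exactly, by orthogonality, since $\psi^{r_1-r_2}$ is nontrivial.
\end{itemize}

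Finally, for asymptotic near-optimality, compute $\delta_{\mathrm{opt}}$ with $M=(\Delta-1)Q$, $K=Q$, $N=\frac{p^{2n}-1}{Q}$. Then $\delta_{\mathrm{opt}}=KN\sqrt{(\Delta-2)/(MN-1)}\approx p^{n}\sqrt{(\Delta-2)/(\Delta-1)}$. The ratio $\rho=(p^n+3)/\delta_{\mathrm{opt}}$ therefore tends to $\sqrt{(\Delta-1)/(\Delta-2)}$, which is at most $2$ once $\Delta\ge3$. As $\Delta$ grows it tends to $1$, matching the optimal entry in Table I. I would note that $\Delta=2$ must be excluded or treated separately, since then $M=K$ and $\delta_{\mathrm{opt}}=0$.
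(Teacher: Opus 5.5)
Apart from its starting identity, your argument is the paper's proof: the same $+1$ for $z=0$, the same $+2$ for the two zeros of the argument, and Lemma~\ref{Lemma:Estimate-additive-multiplicative-character} with $m=2$. You are more careful than the paper on two points it skips. First, you check that $f$ is not a constant times a $\Delta$-th power; the case $\eta_1=\eta_2$, $\tau=0$, $r_1\neq r_2$, where the sum is exactly $0$, is the one that needs this. Second, you notice that $\Delta=2$ gives $M=K$ and $\delta_{\mathrm{opt}}=0$, so ``near-optimal'' is meaningless there and $\Delta\ge 3$ must be assumed.

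The genuine gap, inherited from the paper, is the identity \eqref{Equation:multiplicative finite fields-1} that you specialize: it holds only for $\tau=0$. Put $N=\frac{p^{2n}-1}{Q}$. Since $g^{N}\neq 1$ for $Q>1$, the formula $t\mapsto g^{kN+t}$ is not $N$-periodic in $t$. With $t+\tau$ reduced modulo $N$, for $t\ge N-\tau$ the partner entry is $s_{k,t+\tau-N}$, evaluated at $g^{kN+t+\tau-N}$ rather than $g^{kN+t+\tau}$. With $F_j(x)=\psi_1^{r_j}(x+\eta_j)$, the true correlation is
\[
R(\tau)=\sum_{z\in A_\tau}F_1(z)\overline{F_2(g^{\tau}z)}+\sum_{z\in B_\tau}F_1(z)\overline{F_2(g^{\tau-N}z)},
\]
where $A_\tau=\{g^{kN+t}:0\le t<N-\tau\}$ and $B_\tau=\{g^{kN+t}:N-\tau\le t<N\}$. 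These are two incomplete sums over discrete-logarithm intervals, and Lemma~\ref{Lemma:Estimate-additive-multiplicative-character}, a bound for complete sums, does not control them.

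This is not a technicality, because the same slip occurs in \eqref{Equation:additive finite fields-5-1}. Take $p=3$, $n=1$, $Q=4$ (so $N=2$), $\mathbb{F}_9=\mathbb{F}_3(\theta)$ with $\theta^2=-1$, $g=1+\theta$ and $\chi(x)=\zeta_3^{\mathrm{Tr}(x)}$. A direct computation gives $R_{\mathbf{S}^{0,2+2\theta,1},\,\mathbf{S}^{0,0,1}}(1)=6\zeta_3+2$, of modulus $2\sqrt{7}>4=p^n+1$, which violates Theorem~\ref{Theorem: Construction by additive finite fields-optimal}. So your argument, like the paper's, only bounds the $\tau=0$ correlations.

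One repair is to evaluate at $g^{k+Qt}$ instead of $g^{kN+t}$. The rows then become cosets of $\langle g^{Q}\rangle$, hence $N$-periodic, and a shift by $\tau$ becomes multiplication by $g^{Q\tau}$. Your proof then goes through with $g^{\tau}$ replaced by $g^{Q\tau}$, provided $\mathcal{H}_Q\cap\langle g^{Q}\rangle=\{1\}$, i.e.\ $\gcd(Q,N)=1$. This condition is what the analogue of \eqref{Equation:multiplicative finite fields-trivial} needs.
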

	
	\begin{proof}
		Let $1\leq r_1, r_2\leq \Delta-1$, 	$\eta_1, \eta_2\in \mathcal{H}_{Q}$ and $0\leq \tau\leq \frac{p^{2n}-1}{Q}-1$ such that
		$r_1=r_2$, $\eta_1=\eta_2$ and $\tau=0$ do not hold at the same time. From (\ref{Equation:multiplicative finite fields-1}), (\ref{Equation:multiplicative finite fields-trivial}) and \textit{Lemma \ref{Lemma:Estimate-additive-multiplicative-character}} we have		
		\begin{eqnarray*}	
			&&\left|R_{\mathbf{S}^{r_1; \eta_1, 0}, \mathbf{S}^{r_2; \eta_2, 0}}(\tau)\right| \\
			&&\leq \left|\sum_{z\in\mathbb{F}_{p^{2n}}}\psi_1\left(\left(z+\eta_1\right)^{r_1}
			\left(g^{\tau}z+\eta_2\right)^{\Delta-r_2}\right)\right|+1  \\
			&&\leq \left|\sum_{z\in\mathbb{F}_{p^{2n}}}\psi\left(\left(z+\eta_1\right)^{r_1}
			\left(g^{\tau}z+\eta_2\right)^{\Delta-r_2}\right)\right|+3 \\
			&&\leq p^{n}+3.
		\end{eqnarray*}	
		Therefore,
		$$
		\delta_{\max}\leq p^{n}+3,
		$$
		and this completes the proof for \textit{Theorem \ref{Theorem: Construction by additive and multiplicative finite fields-optimal}}.
	\end{proof}
	
	\subsection{Extremal Behavior and Tightness of Theorem \ref{Theorem: Construction by additive and multiplicative finite fields} and Theorem \ref{Theorem: Construction by additive and multiplicative finite fields-optimal}}
	
	We give explicit extremal examples demonstrating that the leading terms in the correlation bounds of \textit{Theorem \ref{Theorem: Construction by additive and multiplicative finite fields}} and \textit{Theorem \ref{Theorem: Construction by additive and multiplicative finite fields-optimal}} are essentially optimal.
	
	\begin{example}\label{Extremal examples:mixed near optimmal}
		Let $p=3$, $n=2$ and $Q=10$.
		Let $\mathbb{F}_{81}=\mathbb{F}_{3}[\alpha]/(\alpha^{4}+\alpha^{3}+\alpha^{2}+1)$ and let
		$g=1+\alpha^{2}$, which is a primitive element of $\mathbb{F}_{81}$.
		Define a nontrivial additive character $\chi$ of $\mathbb{F}_{81}$ by
		\[
		\chi(x)=\zeta_{3}^{\operatorname{Tr}(x)},
		\qquad \zeta_{3}=e^{2\pi i/3},
		\]
		where $\mathrm{Tr}(x)$ denotes the trace function from $\mathbb{F}_{81}$ to $\mathbb{F}_3$.
		Let $\psi$ be a multiplicative character of $\mathbb{F}_{81}$ of order $10$, defined by
		\[
		\psi(0)=0,\qquad 
		\psi(g^{k})=\zeta_{10}^{k},\quad 0\le k\le 79.
		\]
		
		We consider the simplest admissible choice of parameters
		$$
		r_1=1,\quad \eta_1=1,\quad \lambda_1=1; \quad r_2=9, \quad  \eta_2=-1,\quad
		\lambda_2=0.
		$$
		For $\tau=0$, from \textit{Lemma \ref{Lemma:additive-multiplicative upper bound}} we have
		\begin{eqnarray*}
			&&\sum_{z\in\mathbb{F}_{q^2}}\psi\left(\left(z+\eta_1\right)^{r_1}
			\left(g^{\tau}z+\eta_2\right)^{\Delta-r_2}\right)\chi\left((\lambda_1-\lambda_2g^{\tau})z\right) \\
			&&=\sum_{z\in\mathbb{F}_{81}} \psi\big((z+1)(z-1)\big)\chi(z)=18
			=2\sqrt{81}.
		\end{eqnarray*}	
		When substituted into the correlation expression of \textit{Theorem \ref{Theorem: Construction by additive and multiplicative finite fields}},
		this example  confirms that \textit{Construction \ref{Construction:additive and multiplicative finite fields}} achieves the correct correlation order with a sharp leading constant.
		
		For completeness, we list the resulting matrices $\mathbf{S}^{1;1,1}$ and $\mathbf{S}^{9;-1,0}$ explicitly below, where each entry represents the exponent of $\zeta_3$.	
		$$
		\mathbf{S}^{1;1,1}=
		\begin{bmatrix}
			1 & 0 & 1 & 0 & 1 & 1 & 2 & 1 \\
			1 & 1 & 1 & 0 & 1 & 0 & 0 & 2 \\
			1 & 1 & 0 & 0 & 0 & 1 & 1 & 2 \\
			2 & 0 & 2 & 1 & 2 & 1 & 1 & 0 \\
			2 & 1 & 1 & 1 & 2 & 2 & 1 & 2 \\
			2 & 1 & 1 & 2 & 0 & 0 & 1 & 1 \\
			0 & 0 & 2 & 2 & 2 & 2 & 2 & 2 \\
			1 & 2 & 0 & 0 & 0 & 1 & 0 & 1 \\
			1 & 1 & 1 & 2 & 0 & 2 & 2 & 0 \\
			2 & 1 & 1 & 0 & 2 & 1 & 0 & 0
		\end{bmatrix},\qquad 
		\mathbf{S}^{9;-1,0}=
		\begin{bmatrix}
			0 & 0 & 0 & 0 & 0 & 0 & 0 & 0 \\
			0 & 0 & 0 & 0 & 0 & 0 & 0 & 0 \\
			0 & 0 & 0 & 0 & 0 & 0 & 0 & 0 \\
			0 & 0 & 0 & 0 & 0 & 0 & 0 & 0 \\
			0 & 0 & 0 & 0 & 0 & 0 & 0 & 0 \\
			0 & 0 & 0 & 0 & 0 & 0 & 0 & 0 \\
			0 & 0 & 0 & 0 & 0 & 0 & 0 & 0 \\
			0 & 0 & 0 & 0 & 0 & 0 & 0 & 0 \\
			0 & 0 & 0 & 0 & 0 & 0 & 0 & 0 \\
			0 & 0 & 0 & 0 & 0 & 0 & 0 & 0
		\end{bmatrix}.
		$$	
		Although the construction involves both multiplicative and additive
		characters, the resulting sequence symbols take values only in
		$\{1,\zeta_3,\zeta_3^2\}$.
		Hence each entry can be uniquely represented as a power of $\zeta_3$,
		and the exponent matrix records this final phase index.
		
		
	\end{example}
	
	\begin{example}\label{Extremal examples:mixed optimmal}
		Let $p=5$, $n=1$ and $Q=6$. 
		Let $\mathbb{F}_{25}=\mathbb{F}_{5}[\alpha]/(\alpha^{2}-2)$.
		Set $g=1+2\alpha\in\mathbb{F}_{25}^{\ast}$, which is a primitive element of
		$\mathbb{F}_{25}^{\ast}$.
		Define a multiplicative character $\psi$ of order $8$ by
		\[
		\psi(0)=0,\qquad
		\psi(g^{t})=\zeta_{8}^{\,t},\quad 0\le t\le 23,
		\]
		where $\zeta_{8}=e^{2\pi i/8}$. Let us consider the following parameters
		$$
		r_1=1,\quad \eta_1=1,\quad \lambda_1=0; \quad r_2=7, \quad  \eta_2=-1,\quad
		\lambda_2=0.
		$$
		For $\tau=0$, from \textit{Lemma \ref{Lemma:additive-multiplicative upper bound optimal}} we have
		\begin{eqnarray*}
			\sum_{z\in\mathbb{F}_{q^2}}\psi\left(\left(z+\eta_1\right)^{r_1}
			\left(g^{\tau}z+\eta_2\right)^{\Delta-r_2}\right) 
			=\sum_{z\in\mathbb{F}_{25}}\psi\big((z+1)(z-1)\big)=5=\sqrt{25}.
		\end{eqnarray*}	
		When substituted into the correlation expression of \textit{Theorem \ref{Theorem: Construction by additive and multiplicative finite fields-optimal}},
		this example  confirms that \textit{Construction \ref{Construction:additive and multiplicative finite fields-optimal}} achieves the correct correlation order with a sharp leading constant.
		
		For completeness, we list the resulting matrices $\mathbf{S}^{1;1,0}$ and $\mathbf{S}^{7;-1,0}$ explicitly below, where each entry represents the exponent of $\zeta_8$.
		$$
		\mathbf{S}^{1;1,0}=
		\begin{bmatrix}
			2 & 0 & 5 & 1 \\
			1 & 0 & 4 & 2 \\
			4 & 6 & 1 & 2 \\
			0 & 7 & 7 & 5 \\
			4 & 3 & 6 & 3 \\
			5 & 6 & 3 & 7
		\end{bmatrix},\qquad 
		\mathbf{S}^{7;-1,0}=
		\begin{bmatrix}
			0 & 5 & 5 & 7 \\
			0 & 1 & 6 & 1 \\
			7 & 6 & 1 & 5 \\
			2 & 4 & 7 & 3 \\
			3 & 4 & 0 & 2 \\
			0 & 6 & 3 & 2
		\end{bmatrix}.
		$$		
	\end{example}
	
	\subsection{Scaling Law and Parameter Trade-offs}
	
	The parameter $\Delta$ introduces an additional degree of freedom
	that affects:
	
	\begin{itemize}
		\item the alphabet size;
		\item the distribution of correlation values;
		\item the effective overloading factor.
	\end{itemize}
	
	The above construction reveals that cubic scaling persists even when:
	
	\begin{itemize}
		\item additive characters are replaced by mixed additive-multiplicative forms;
		\item additional parameters are introduced;
		\item the sequence structure is significantly generalized.
	\end{itemize}
	
	Despite these variations, the fundamental scaling law remains unchanged.
	In particular, for admissible choices of $\Delta$ and $Q$,
	\[
	M \asymp K^3 N^2,
	\]
	up to constant factors. This shows that the scaling exponent $\gamma = 3$ is invariant
	under a wide class of parameter perturbations.
	
	These observations strongly support the view that cubic scaling
	is a universal law governing asymptotically near-optimal QCSS.
	
	\subsection{Role of the Parameter $Q$ and Structural Flexibility}
	
	A key feature of the proposed constructions is that the parameters
	$(M,K,N)$ are jointly controlled by the divisor $Q \mid (p^{2n}-1)$.
	In particular, we have
	\[
	K = Q, \quad N = \frac{p^{2n}-1}{Q},
	\]
	so that $Q$ governs the trade-off between the flock size and the
	sequence length.
	
	By varying $Q$, the construction generates a family of QCSS with
	different parameter configurations. Specifically,
	
	\begin{itemize}
		\item large $Q$ leads to larger $K$ and shorter sequence length $N$;
		\item small $Q$ leads to smaller $K$ and longer sequence length $N$.
	\end{itemize}
	
	This flexibility allows the construction to adapt to different system
	requirements, such as user capacity versus sequence length constraints.
	
	Despite this variability, the fundamental scaling law remains invariant.
	Indeed, for all admissible choices of $Q$, the set size satisfies
	\[
	M \asymp K^3 N^2,
	\]
	up to constant factors. 
	
	This shows that the cubic scaling exponent is independent of the
	specific choice of $Q$, and thus reflects an intrinsic structural
	property of the construction rather than a parameter artifact.
	
	\section{Conclusion}
	
	In this paper, we investigated the fundamental scalability limits of QCSSs from a geometric perspective. By establishing a correspondence between QCSSs and complex unit-norm codebooks, we derived a general geometry-to-scalability transfer principle, which translates density thresholds of codebooks into polynomial upper bounds on the achievable set size.
	
	Based on this framework, we proved that asymptotically optimal QCSSs obey a quadratic scaling law $M \le (1+o(1))K^{2}N$, while asymptotically near-optimal QCSSs satisfy a cubic scaling law $M \le (1+o(1))K^{3}N^{2}$ for $\rho < (1+\sqrt{5})/2$. These results reveal that scalability limits are not artifacts of specific constructions, but are governed by intrinsic geometric constraints of the induced codebooks. In particular, the emergence of the cubic law indicates a fundamental scalability barrier in the near-optimal regime.
	
	A second major contribution of this work lies in the explicit constructions that asymptotically attain these scaling laws. Unlike conventional sequence designs that primarily aim at reducing correlation, the proposed constructions operate at the boundary of scalability, simultaneously maximizing the set size while preserving optimal or near-optimal correlation levels. In this sense, they should be viewed as extremal designs that complement the converse theory.
	
	At a structural level, all proposed constructions follow a unified algebraic paradigm. The indexing structure of QCSS matrices is embedded into multiplicative orbits of a finite field, while the matrix entries are generated via additive or mixed character evaluations of low-degree polynomial phases. This mechanism converts correlation analysis into bounding finite-field character sums, thereby enabling a precise control of correlation growth under aggressive scaling of the set size.
	
	More specifically, the asymptotically optimal constructions arise from quadratic-phase additive-character designs, achieving the critical growth $M = K^{2}N + K$ while maintaining optimal correlation magnitude. In contrast, the near-optimal constructions introduce a cubic phase component, which significantly enlarges the parameter space and leads to the extremal growth
	\[
	M = K^{3}N^{2} + 2K^{2}N + K,
	\]
	while still preserving near-optimal correlation. This demonstrates that the cubic scaling law is not only an upper bound derived from geometric arguments, but also an attainable algebraic phenomenon.
	
	Furthermore, the tunable constructions developed in this work show that these extremal scaling behaviors are robust under substantial structural variations, including the incorporation of multiplicative characters and flexible alphabet sizes. This robustness indicates that quadratic and cubic scaling laws reflect stable structural properties of QCSSs, rather than isolated outcomes of particular constructions.
	
	Taken together, the results of this paper suggest a unified picture: geometric analysis determines the admissible scalability limits, while algebraic constructions demonstrate that these limits are asymptotically achievable. This interplay between geometry and algebra provides a deeper understanding of why polynomial scaling laws emerge as the governing principles for QCSS design.
	
	Finally, we conjecture that the cubic scaling law is universal for all asymptotically near-optimal QCSSs with $1 < \rho \le 2$. Establishing or refuting this conjecture remains an important open problem, which is closely related to advancing higher-order geometric bounds for complex codebooks.


\bigskip\bigskip
	
	
	\begin{thebibliography}{99}					
		
		\bibitem{TsengL1972} C. C. Tseng, and C. L. Liu, ``Complementary sets of sequences,'' 
		\textit{IEEE Trans. Inf. Theory}, vol. 18, no. 5, pp. 644--652, 1972.
		
		\bibitem{SuehiroH1988} N. Suehiro, and M. Hatori, ``$N$-shift cross-orthogonal sequences," \textit{IEEE Trans. Inf. Theory}, vol. 34, no. 1, pp. 143--146, 1988.
		
		\bibitem{RathinakumarC2008}	A. Rathinakumar, and A. K. Chaturvedi, ``Complete mutually orthogonal Golay complementary sets from Reed-Muller codes," \textit{IEEE Trans. Inf. Theory}, vol. 54, no. 3, pp. 1339--1346, 2008. 
		
		\bibitem{DavisJ1999} J. A. Davis, and J. Jedwab, ``Peak-to-mean power control in OFDM, Golay complementary sequences, and Reed--Muller codes," \textit{IEEE Trans. Inf. Theory}, vol. 45, no. 7, pp. 2397--2417, 1999.
		
		\bibitem{Liu2013-TIT} Z. Liu, Y. Li, and Y. L. Guan, ``New Constructions of General QAM Golay Complementary Sequences," \textit{IEEE Trans. Inf. Theory}, vol. 59, no. 11, pp. 7684--7692, Nov. 2013.
		
		\bibitem{Chen2001} H. H. Chen, J. F. Yeh, and N. Seuhiro, ``A multi-carrier CDMA architecture based on orthogonal complementary codes for new generations of wideband wireless communications," \textit{IEEE Commun. Mag.}, vol. 39, no. 10, pp. 126--135, Oct. 2001.
		
		\bibitem{Liu2014-TCOM}
		Z. Liu, Y. L. Guan and U. Parampalli, ``New complete complementary codes for the peak-to-mean
		power control in MC-CDMA," \textit{IEEE Trans. Commun.}, vol. 62, no. 3, pp. 1105--1113, Mar. 2014.
		
		\bibitem{Liu2014-TWC}
		Z. Liu, Y. L. Guan and H. H. Chen, ``Fractional-delay-resilient receiver for interference-free MC-CDMA
		communications based on complete complementary codes," \textit{IEEE Trans. Wireless Commun.}, vol. 32, no. 11, pp. 1974--1986, Nov. 2014.
		
		
		\bibitem{WangA2007} S. Wang and A. Abdi, ``MIMO ISI channel estimation using uncorrelated Golay complementary sets of polyphase sequences," \textit{IEEE Trans. Veh. Technol.}, vol. 56, no. 5, pp. 3024--3039,  2007. 	
		
		\bibitem{Fan2008-TWC} W. Yuan, Y. Tu, and P. Fan, ``Optimal training sequences for cyclic-prefix-based single-carrier multi-antenna systems with space-time block-coding," \textit{IEEE Trans. Wireless Commun.}, VOL. 7, NO. 11, pp. 4047--4050, Nov. 2008. 
		
		\bibitem{PezeshkiCMH2008} A. Pezeshki, A. R. Calderbank, W. Moran, and S. D. Howard,  
		``Doppler resilient Golay complementary waveforms," 
		\textit{IEEE Trans. Inf. Theory}, vol. 54, no. 9, pp. 4254--4266, 2008.	
		
		\bibitem{Tan2014-TSP}
		J. Tan, N. Zhang, and B. Tang, ``Construction of Doppler resilient complete complementary code in MIMO radar," \textit{IEEE Trans. Signal Process.}, vol. 62, no. 18, pp. 4704--4712, Sep. 2014. 
		
		\bibitem{Sheng2025-TIT}
		B. Sheng, Y. Yang, Z. Zhou, Z. Liu, and P. Fan, ``Doppler Resilient Complementary Sequences: Theoretical Bounds and Optimal Constructions," \textit{IEEE Trans. Inf. Theory}, vol. 71, no. 7, pp. 5166--5177, Jul. 2025. 
		
		\bibitem{LiuPGB2013} Z. Liu, U. Parampalli, Y. L. Guan and S. Boztas, ``Constructions of optimal and near-optimal quasi-complementary sequence sets from singer difference sets,'' \textit{IEEE Wireless Commun. Lett.}, vol. 2, no. 5, pp. 487--490, 2013. 
		
		\bibitem{LiuGM2014} Z. Liu, Y. L. Guan, and W. H. Mow, ``A tighter correlation lower bound for quasi-complementary sequence sets,'' \textit{IEEE Trans. Inf. Theory}, vol. 60, no. 1, pp. 388--396, 2014. 
		
		\bibitem{LiYL2019} 	Y. Li, T. J. Yan, and C. Lv, ``Construction of a near-optimal quasi-complementary sequence set from almost difference set,'' \textit{Cryptogr. Commun.}, vol. 11, no. 4, pp. 815--824, 2019. 
		
		\bibitem{LiLX2018} Y. B. Li, T. Liu, and C. Q. Xu, ``Constructions of asymptotically optimal quasi-complementary sequence sets,'' \textit{IEEE Commun. Lett.}, vol. 22, no. 8, pp. 1516--1519, 2018.
		
		\bibitem{LiTLX2018} Y. B. Li, L. Y. Tian, T. Liu, and C. Q. Xu, ``Constructions of quasi-complementary sequence sets associated with characters,'' \textit{IEEE Trans. Inf. Theory}, vol. 65, no. 7, pp. 4597--4608,  2019. 
		
		\bibitem{LiTLX2019} Y. B. Li, L. Y. Tian, T. Liu, and C. Q. Xu, ``Two constructions of asymptotically optimal quasi-complementary sequence sets,'' \textit{IEEE Trans. Commun.}, vol. 67, no. 3, pp. 1910--1924, 2019.
		
		\bibitem{LuoCSH2021} G. J. Luo, X. W. Cao, M. J. Shi, and T. Helleseth, ``Three new constructions of asymptotically optimal periodic quasi-complementary sequence sets with small alphabet sizes,'' 
		\textit{IEEE Trans. Inf. Theory}, vol. 67, no. 8, pp. 5168--5177, 2021.
		
		\bibitem{XiaoLC2025} H. Y. Xiao, G. J. Luo, and X. W. Cao, ``New constructions of asymptotically optimal quasi-complementary sequence sets with small alphabet sizes,'' \textit{IEEE Trans. Commun.},  vol. 73, no. 8, pp. 5881--5890, 2025.
		
		\bibitem{HengWXZ2024new} Z. L. Heng, P. Wang, C. L. Xie, and H. Y. Zhou, ``Large Sets of Quasi-Complementary Sequences From Polynomials over Finite Fields and Gaussian Sums,''  \textit{IEEE Trans. Inf. Theory}, vol. 72, no. 1, pp. 729--741, 2026.
				
		\bibitem{WangHL2025} P. Wang, Z. L. Heng, and C. J. Li, ``New constructions of asymptotically optimal periodic and aperiodic quasi-complementary sequence sets,'' \textit{IEEE Trans. Commun.}, 	
		vol. 73, no. 12, pp. 14167--14182, Dec. 2025. 
		
		\bibitem{Welch1974} L. R. Welch, ``Lower bounds on the maximum cross-correlation of signals," 
		\textit{IEEE Trans. Inf. Theory}, vol. 20, no. 3, pp. 397--399, 1974.
		
		\bibitem{StrohmerH2003} T. Strohmer, and R. W. Heath, ``Grassmannian frames with applications to coding and communication," \textit{Appl. Comput. Harmon. Anal.}, vol. 14, no. 3, pp. 257--275, 2003.
		
		\bibitem{Levenshtein1982} V. I. Levenshtein, ``Bounds on the maximal cardinality of a code with bounded modules of the inner product,"  \textit{Soviet Math. Dokl.}, vol. 25, pp. 526--531, 1982.
		
		\bibitem{Levenshtein1983} V. I. Levenshtein, ``Bounds for packing of metric spaces and some of their applications," 
		\textit{Problem Cybern.}, vol. 40, pp. 43--110, 1983.
		
		\bibitem{ConwayHS1996} J.H. Conway, R.H. Harding, and N. J. A. Sloane, ``Packing lines, planes, etc.: Packings in grassmannian spaces," \textit{Exp. Math.}, vol. 5, no. 2, pp. 139--159, 1996.
		
		\bibitem{XiangDM2015} C. Xiang, C. Ding, and S. Mesnager, ``Optimal codebooks from binary codes meeting the Levenshtein bound," \textit{IEEE Trans. Inf. Theory}, vol. 61, no. 12, pp. 6526--6535, 2015.
		
		\bibitem{DingY2007} C. Ding, and J. Yin, ``Signal sets from functions with optimum nonlinearity," \textit{IEEE Trans. Commun.}, vol. 55, no. 5, pp. 936--940, 2007.
		
		\bibitem{ZhouDL2014} Z. Zhou, C. Ding, and N. Li, ``New families of codebooks achieving the Levenstein bound," \textit{IEEE Trans. Inf. Theory}, vol. 60, no. 11, pp. 7382--7387, 2014.
		
		\bibitem{HengY2017} Z. Heng, and Q. Yue, ``Optimal codebooks achieving the Levenshtein bound from generalized bent functions over $\mathbb{Z}_4$," \textit{Cryptogr. Commun.}, vol. 9, no. 1, pp. 41--53, 2017.
		
		\bibitem{WangY2020} Q. Wang, and Y. Yan, ``Asymptotically optimal codebooks derived from generalised Bent functions," \textit{IEEE Access}, vol. 8, pp. 54905--54909, 2020.
		
		\bibitem{TianLLX2019} L. Tian, Y. Li, T. Liu, and C. Xu, ``Constructions of codebooks asymptotically achieving the Welch bound with additive characters," \textit{IEEE Signal Process. Lett.}, vol. 26, no. 4, pp. 622--626, 2019.
		
		\bibitem{HanSYW2020} L. Han, S. Sun, Y. Yan, and Q. Wang, ``A new construction of codebooks meeting the Levenshtein bound," \textit{IEEE Access}, vol. 8, pp. 77598--77603, 2020.
		
		\bibitem{LiuCWYJ2025} W. Liu, K. CHENG, Q. Wang, Y. Yan, and G. Jin, ``Constructions of codebooks based on additive characters," \textit{IEICE Trans. Fundamentals}, vol. E108-A, no. 10, pp. 1413--1017, 2015.
		
		\bibitem{HongPNHK2014} S. Hong, H. Park, J. S. No, T. Helleseth, and Y. S. Kim, ``Near-optimal partial Hadamard codebook construction using binary sequences obtained from quadratic residue mapping," \textit{IEEE Trans. Inf. Theory}, vol. 60, no. 6, pp. 3698--3705, 2014.
		
		\bibitem{HengDY2017} Z. Heng, C. Ding, and Q. Yue, ``New constructions of asymptotically optimal codebooks with multiplicative characters," \textit{IEEE Trans. Inf. Theory}, vol. 63, no. 10, pp. 6179--6187, 2017.
		
		\bibitem{Heng2018} Z. Heng,  ``Nearly optimal codebooks based on generalized Jacobi sums," \textit{Dis. Appl. Math.}, vol. 25, no. 11, pp. 227--240, 2018.
		
		\bibitem{TanZZ2016} P. Tan, Z. Zhou, and D. Zhang, ``A construction of codebooks nearly achieving the Levenshtein bound," \textit{IEEE Signal Process. Lett.}, vol. 23, no. 10, pp. 1306--1309, 2016.
		
		\bibitem{SunHYY2021} S. Sun, L. Han, Y. Yan, and Y. Yao, ``Two new classes of codebooks asymptotically achieving the Welch bound," \textit{IEEE Access}, vol. 9, pp. 5881--5886, 2021.
		
		\bibitem{XiaZG2005} P. Xia, S. Zhou, and G.B. Giannakis, ``Achieving the Welch bound with difference sets," \textit{IEEE Trans. Inf. Theory}, vol. 51, no. 5, pp. 1900--1907, 2005.
		
		\bibitem{Ding2006} C. Ding, ``Complex codebooks from combinatorial designs," \textit{IEEE Trans. Inf. Theory}, vol. 52, no. 9, pp. 4229--4235, 2006.
		
		\bibitem{DingF2007} C. Ding, and T. Feng, ``A generic construction of complex codebooks meeting the Welch bound," \textit{IEEE Trans. Inf. Theory}, vol. 53, no. 11, pp. 4245--4250, 2007.
		
		\bibitem{ZhangF2012} A. Zhang, and K. Feng, ``Two classes of codebooks nearly meeting the Welch bound," \textit{IEEE Trans. Inf. Theory}, vol. 58, no. 4, pp. 2507--2511, 2012.
		
		\bibitem{HuW2014} H. Hu, and J. Wu, ``New constructions of codebooks nearly meeting the Welch bound with equality," \textit{IEEE Trans. Inf. Theory}, vol. 60, no. 2, pp. 1348--1355, 2014.
		
		\bibitem{LuoC2018} G. Luo, and X. Cao, ``Two constructions of asymptotically optimal codebooks via the hyper Eisenstein sum," \textit{IEEE Trans. Inf. Theory}, vol. 64, no. 10, pp. 6498--6505, 2018.
		
		\bibitem{Yu2012} N. Y. Yu, ``A construction of codebooks associated with binary sequences," \textit{IEEE Trans. Inf. Theory}, vol. 58, no. 8, pp. 5522--5533, 2012.
		
		\bibitem{NiederreiterW2002} H. Niederreiter, and A. Winterhof, ``Incomplete Character Sums and Polynomial Interpolation of the Discrete Logarithm,'' \textit{Finite Fields Appl.},  vol. 8, no. 2, pp. 184--192, 2002.
		
		\bibitem{LidlN1997} R. Lidl, and H. Niederreiter,
		{\it{Finite Fields}} (Encyclopedia of mathematics
		and its applications), 2nd ed. Cambridge, U.K., Cambridge Univ. Press,
		1997.
		
		\bibitem{Yip2022} C. H. Yip, ``Gauss sums and the maximum cliques in generalized Paley graphs of square order,'' \textit{Funct. Approx. Comment. Math.},  vol. 66, no. 1, pp. 119--138, 2022.	
		
	\end{thebibliography}
\end{document}